\documentclass[11pt]{article}

\usepackage[margin=1in]{geometry}
\usepackage[svgnames]{xcolor}
\usepackage{amsmath, amssymb, amsthm, mathtools}
\usepackage{enumitem}
\usepackage{hyperref}
\usepackage[nameinlink, capitalize]{cleveref}
\usepackage{tikz}
\usetikzlibrary{arrows.meta, positioning}

\hypersetup{
  colorlinks = true,
  linkcolor = blue!50!black,
  citecolor = blue!50!black,
  urlcolor = blue!50!black,
  pdftitle = {Formalization of the Galerkin Construction for the Two-Dimensional Navier--Stokes Equations in Lean},
  pdfauthor = {Weinan Wang},
  pdfkeywords = {Lean formalization, Navier--Stokes equations, Galerkin method, Leray--Hopf solutions, spectral compactness}
}

\newtheorem{theorem}{Theorem}[section]
\newtheorem{proposition}[theorem]{Proposition}

\theoremstyle{definition}
\newtheorem{remark}[theorem]{Remark}

\newcommand{\R}{\mathbb{R}}
\newcommand{\ip}[2]{\left\langle #1, #2 \right\rangle}
\newcommand{\norm}[1]{\left\lVert #1 \right\rVert}
\newcommand{\Bil}{\mathcal{B}}

\title{Formalization of the Galerkin Construction\\
for the Two-Dimensional Navier--Stokes Equations in Lean}
\author{Weinan Wang}
\date{}

\begin{document}

\maketitle

\begin{abstract}
We formalize in Lean~4 the Galerkin construction of global Leray--Hopf weak
solutions for the unforced two-dimensional Navier--Stokes equations on
arbitrary bounded open domains with no-slip boundary conditions.  For every
initial datum in the solenoidal \(L^2\) velocity space, the solution has a
weakly continuous velocity path, satisfies the weak equation locally in time, and obeys
the energy inequality at every time.  For rectangles, we also formalize
finite-horizon solutions with continuous forcing in the dual energy space.
The development
includes divergence-free graph spaces, compact spectral coordinates,
transport cancellation, Ladyzhenskaya estimates, and simultaneous
space--time compactness.  The abstract Hilbert-space results apply to
evolution equations with a compact energy-to-velocity embedding and a skew
trilinear nonlinearity.
\end{abstract}

\noindent\textbf{Keywords.}
Lean formalization; Navier--Stokes equations; Galerkin method; Leray--Hopf
solutions; spectral compactness; Ladyzhenskaya inequality.

\medskip

\noindent\textbf{2020 Mathematics Subject Classification.}
Primary 68V20; Secondary 03B35, 35A15, 35D30, 35Q30, 47A58.

\section{Introduction}

The foundational works of Leray \cite{Leray1934} and Hopf \cite{Hopf1950}
established the modern weak theory of the incompressible Navier--Stokes
equations.  Their constructions combine approximation with kinetic-energy
estimates that survive weak convergence, producing the
global finite-energy solutions now called Leray--Hopf solutions.  In two space dimensions, the
Ladyzhenskaya interpolation inequality supplies the additional integrability
needed to control the quadratic transport term in the energy class.  This
combination of Galerkin approximation, compactness, and energy estimates is a
standard existence method for nonlinear parabolic equations; see
\cite{Ladyzhenskaya1969,Temam1977}.  Detailed treatments of the
Navier--Stokes theory appear in the monographs of Constantin and Foias,
Robinson, Rodrigo, and Sadowski, and Bedrossian and Vicol
\cite{ConstantinFoias1988,RobinsonRodrigoSadowski2016,BedrossianVicol2022}.

Let \(\Omega\subset\mathbb R^2\) be an arbitrary bounded open set.  Consider
the normalized, unforced incompressible Navier--Stokes system on \(\Omega\):
\begin{equation}\label{eq:navier-stokes-problem}
  \partial_tu-\Delta u+(u\cdot\nabla)u+\nabla p=0,
  \qquad \nabla\cdot u=0,
  \qquad u|_{\partial\Omega}=0,
  \qquad u(0)=u_0.
\end{equation}
For arbitrary \(\Omega\), the boundary condition is imposed by requiring the
energy-class velocity to lie in the \(H^1\)-closure of compactly supported
divergence-free fields.  For initial data in
the corresponding divergence-free velocity space, the Leray--Hopf theory
seeks a velocity with finite kinetic energy and square-integrable gradient,
solving \cref{eq:navier-stokes-problem} against divergence-free tests and
satisfying
\[
  \norm{u(t)}_{L^2(\Omega)}^2
  +2\int_0^t\norm{\nabla u(s)}_{L^2(\Omega)}^2\,ds
  \leq \norm{u_0}_{L^2(\Omega)}^2
\]
at every time.  The classical Galerkin proof constructs finite-dimensional
solutions, derives this estimate before taking a limit, obtains strong
compactness in the velocity space, and then passes the quadratic transport term;
the standard compactness step is often obtained from the
Aubin--Lions--Simon lemma~\cite{Aubin1963,Simon1986}.  A spectral compactness
argument instead combines uniform control of the high modes, obtained from
compactness of the spatial embedding, with equicontinuity of every fixed
finite projection, obtained from the equation.  Its ingredients are
finite-dimensional Arzel\`a--Ascoli compactness and operator-norm approximation
after a compact map.

The velocity and velocity--gradient norms give distinct completions.
Convection takes values in the dual of the energy space, while
finite-dimensional norm equivalence yields bounds that depend on the
Galerkin dimension.  The space--time limit has a representative attaining
the initial value and satisfying the energy inequality at every time.  A
single subsequence supports these limiting statements.

The bounded-domain proof uses an auxiliary rectangle \(Q\) with
\(\overline\Omega\subset Q^\circ\).  Compactly supported \(C^1\),
divergence-free fields with zero face values generate a velocity--gradient
graph space \(V_Q\) and a velocity space \(H_Q\).  The coordinate projection
\(J_Q:V_Q\to H_Q\) is proved compact by a Fourier Rellich argument on the
completed graph.  Applying the compact self-adjoint spectral theorem to
\(J_Q^*J_Q\) constructs paired Hilbert bases of \(V_Q\) and \(H_Q\), positive
singular values, and an increasing sequence of finite index sets whose union
is the full index set.  The resulting finite-dimensional subspaces of \(H_Q\)
admit exact lifts to \(V_Q\) and compatible test projections.  For each
Galerkin index, Riesz representation produces
the coefficient ODE, transport skew-symmetry gives the exact energy identity,
and the two-dimensional Ladyzhenskaya estimate gives a uniform
\(L^2(0,T;V_Q')\) bound on the time derivatives.

The compactness argument combines finite-mode equicontinuity with
operator-norm convergence of the finite-rank approximations.  It yields strong
\(L^2(0,T;H_Q)\) convergence and, along the same subsequence, weak
\(L^2(0,T;V_Q)\) convergence.  Ladyzhenskaya interpolation then gives the
strong \(L^2(0,T;L^4(Q))\) convergence needed for convection.  A diagonal
subsequence defines a weakly continuous representative, and weak lower
semicontinuity in the product of the velocity and gradient spaces yields the energy
inequality at every time.

Compactly supported smooth solenoidal fields on \(\Omega\) define its
velocity and graph energy spaces.  Zero extension to \(Q\) transfers the
compact embedding and the transport and interpolation estimates to these
spaces.  The Galerkin
solutions agree on overlapping time intervals by uniqueness of their
finite-dimensional coefficient equations.  A single diagonal subsequence
then yields one weakly continuous velocity path on \([0,\infty)\), with
energy-class representatives and the weak equation on every finite horizon.

Lean~4 and mathlib provide the proof-assistant
setting~\cite{MouraUllrich2021,Mathlib2020}.  Related work in formalized
analysis includes the Lean formalization of the local \(h\)-principle for
open, ample first-order partial differential relations
~\cite{VanDoornMassotNash2023}, the Gagliardo--Nirenberg--Sobolev inequality
~\cite{VanDoornMacbeth2024}, Schwartz functions, tempered distributions, and
Fourier-defined Sobolev spaces~\cite{Doll2025}, and a generalized Carleson
theorem~\cite{VanDoornCarleson2026}.  Sun's PDE library develops introductory
heat-equation theory in Lean~\cite{SunPDE2026}, while the Lax--Milgram theorem
has been formalized in Coq~\cite{BoldoEtAl2017}.

Formalized PDE theorems in Lean now include the core interior
De Giorgi--Nash--Moser theory for elliptic equations
by Armstrong and Kempe~\cite{ArmstrongKempe2026}, which they describe as,
to their knowledge, the first machine-checked formalization of a major
theorem in modern PDE theory.  Armstrong and Kuusi
formalized the coarse-graining arguments underlying quantitative stochastic
homogenization
~\cite{ArmstrongKuusi2025,ArmstrongKuusiCoarseGraining2026}.
Miller formalized Dobrushin's mean-field derivation of the nonlinear Vlasov
equation, including well-posedness, stability, and the mean-field
limit~\cite{Miller2026}.
Bedrossian formalized nonlinear Landau damping for the Vlasov--Poisson
equations in Gevrey regularity~\cite{Bedrossian2026}.  OpenAI has released
Lean formalizations of forced finite-time breakdown for three-dimensional
Navier--Stokes on \(\mathbb R^3\) and \(\mathbb R^3/\mathbb Z^3\), and of an
unforced finite-time singularity for the Euler equation on
\(\mathbb R^3\)~\cite{OpenAINavierStokesEuler2026}.  The present theorem
establishes weak existence from arbitrary energy-class data on bounded
no-slip planar domains.  Its proof combines finite-dimensional dynamics,
space--time compactness, a nonlinear limit, weak temporal continuity, and an
inherited energy inequality.

\begin{remark}[Independent contemporaneous formalization]
Uda's contemporaneous Lean formalization~\cite{UdaLerayHopf2026} and the
present formalization were produced independently.  Uda proves global-in-time
existence for the unforced three-dimensional problem on the periodic torus
and on \(\mathbb R^3\), for every positive viscosity.
\end{remark}

\section{Main Results}\label{sec:main-result}

Let \(\Omega\subset\mathbb R^2\) be bounded and open, and choose an auxiliary
rectangle \(Q\) with \(\overline\Omega\subset Q^\circ\).  Let \(\mathcal D_\sigma(\Omega)\)
be the compactly supported \(C^1\) divergence-free fields in \(\Omega\),
extended by zero to \(Q\).  Define \(H_\Omega\) as their closure in
\(L^2(Q;\mathbb R^2)\) and \(V_\Omega\) as the closure of their
velocity--gradient graphs in
\(L^2(Q;\mathbb R^2)\times L^2(Q;\mathbb R^{2\times2})\).
Write \(J_\Omega:V_\Omega\to H_\Omega\) and
\(G_\Omega:V_\Omega\to L^2(Q;\mathbb R^{2\times2})\) for the
coordinate maps, and \(b_\Omega\) for the continuous convection form.
Elements of these closed spaces vanish almost everywhere on
\(Q\setminus\Omega\).

\begin{theorem}[Global solution on a bounded open domain]
\label{thm:open-domain-global}
For every \(u_0\in H_\Omega\), there is a path
\(u_c:[0,\infty)\to H_\Omega\) and, for each integer \(N\geq0\), a function
\(u_V^N\in L^2([0,N];V_\Omega)\) such that
\(u_c(0)=u_0\) and \(t\mapsto(u_c(t),y)_{H_\Omega}\) is continuous for
every \(y\in H_\Omega\).  On \([0,N]\),
\(J_\Omega u_V^N=u_c\) almost everywhere, and for every \(t\in[0,N]\),
\[
  \norm{u_c(t)}_{H_\Omega}^2
  +2\int_0^t\norm{G_\Omega u_V^N(s)}_{L^2(Q)}^2\,ds
  \leq \norm{u_0}_{H_\Omega}^2.
\]
For every \(\phi\in V_\Omega\) and scalar time test
\(\eta\in C^1([0,N])\) with \(\eta(N)=0\),
\begin{align*}
  &-\int_0^N (u_c(t),J_\Omega\phi)_{H_\Omega}\eta'(t)\,dt
  +\int_0^N (G_\Omega u_V^N(t),G_\Omega\phi)_{L^2(Q)}\eta(t)\,dt\\
  &\qquad
  +\int_0^N b_\Omega(u_V^N(t),u_V^N(t),\phi)\eta(t)\,dt
  =(u_0,J_\Omega\phi)_{H_\Omega}\eta(0).
\end{align*}
\end{theorem}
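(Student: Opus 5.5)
We follow the Galerkin route sketched in the introduction: build finite-dimensional approximations, derive uniform energy and time-derivative bounds, extract compact limits, and pass to the limit.

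\emph{Spaces and spectral basis.} Zero extension identifies \(H_\Omega\) and \(V_\Omega\) with closed subspaces of \(H_Q\) and \(V_Q\). Hence \(J_\Omega\) is compact, as the restriction of the compact map \(J_Q\). We then apply the compact self-adjoint spectral theorem to \(J_\Omega^*J_\Omega\). This gives a Hilbert basis \((e_k)\) of \(V_\Omega\) with singular values \(\sigma_k>0\). The vectors \(f_k=\sigma_k^{-1}J_\Omega e_k\) form an orthonormal family in \(H_\Omega\), complete because \(J_\Omega\) has dense range. Let \(F_n\) be increasing finite index sets exhausting the index set, and set \(W_n=\operatorname{span}\{f_k:k\in F_n\}\). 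On \(W_n\) we have exact lifts to \(V_\Omega\) and compatible projections \(P_n\).

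\emph{Galerkin system.} For each \(n\), Riesz representation on \(W_n\) turns the weak equation, tested against the \(e_k\), into a polynomial coefficient ODE with datum \(P_nu_0\). Testing against the solution itself and using the skew identity \(b_\Omega(u,u,u)=0\) (transport cancellation) gives the exact energy identity
\[
\norm{u_n(t)}^2+2\int_0^t\norm{G_\Omega u_n}^2\,ds=\norm{P_nu_0}^2\le\norm{u_0}^2 .
\]
This bound rules out blow-up, so each \(u_n\) is global on \([0,\infty)\). The Ladyzhenskaya inequality \(\norm{u}_{L^4}^2\lesssim\norm{u}_{L^2}\norm{\nabla u}_{L^2}\), transferred from \(Q\) by zero extension, gives \(|b_\Omega(u,u,\phi)|\lesssim\norm{u}\,\norm{Gu}\,\norm{\phi}_{V}\). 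Combined with the energy identity, this yields a bound on \(\partial_t u_n\) in \(L^2(0,N;V_\Omega')\) that is uniform in \(n\).

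\emph{Compactness.} Fix \(N\). For fixed \(m\), the coefficients \(\langle u_n(t),f_k\rangle\), \(k\in F_m\), are uniformly bounded and uniformly H\"older-\(\tfrac12\) in \(t\), by the derivative bound. Arzel\`a--Ascoli therefore gives uniform convergence of each fixed finite projection. For the high modes we use compactness: \(\norm{(I-P_m)J_\Omega}_{op}\to0\), so
\[
\int_0^N\norm{(I-P_m)u_n}^2\,dt\le\norm{(I-P_m)J_\Omega}_{op}^2\int_0^N\norm{u_n}_V^2\,dt
\]
is small uniformly in \(n\). Together these give Cauchy-ness in \(L^2(0,N;H_\Omega)\) along a diagonal subsequence. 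Along the same subsequence we also extract weak convergence in \(L^2(0,N;V_\Omega)\) to \(u_V^N\), and pointwise weak convergence \(u_n(t)\rightharpoonup u_c(t)\) for all \(t\), using equicontinuity of every finite projection together with the uniform \(H\) bound. A further diagonal over \(N\in\mathbb N\) yields a single subsequence and a single path \(u_c\). By construction \(u_c(0)=u_0\), \(u_c\) is weakly continuous, and \(J_\Omega u_V^N=u_c\) a.e. The \(u_V^N\) are consistent across \(N\), since each is identified through \(J_\Omega\).

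\emph{Limits and main obstacle.} Strong \(L^2H\) convergence, weak \(L^2V\) boundedness, and Ladyzhenskaya interpolation give strong \(L^2(0,N;L^4)\) convergence. Since \(b_\Omega(u,u,\phi)=-\int (u\cdot\nabla\phi)\cdot u\) is continuous in \(L^4\times L^4\) for fixed \(\phi\in V\), the convection integral converges. The linear terms converge by weak convergence. The equation is first obtained for \(\phi=e_k\) and then extended to all \(\phi\in V_\Omega\) by density and continuity. The energy inequality at every \(t\) follows from weak lower semicontinuity of the norm on \(H_\Omega\times L^2(0,t;L^2)\) applied to the identity above. We expect the main obstacle to be the compactness step. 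Its difficulty is obtaining strong convergence, weak convergence, pointwise weak convergence for every \(t\), and consistency over all horizons \(N\) on one subsequence, while keeping the modewise equicontinuity estimate independent of the Galerkin dimension; this requires going through the \(V'\) bound and never through finite-dimensional norm equivalence.
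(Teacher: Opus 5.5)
Your proposal is correct and follows essentially the same route as the paper. Both arguments use zero extension into $Q$, singular-value Galerkin spaces for $J_\Omega$, the exact energy identity from skew cancellation, a uniform $V_\Omega'$ derivative bound via Ladyzhenskaya, spectral compactness with diagonal extraction over horizons, and weak lower semicontinuity in $H_\Omega\oplus L^2(0,t;L^2)$.

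There are two minor differences. First, you solve each Galerkin ODE once on $[0,\infty)$, which makes the single path $u_c$ automatic; the paper instead solves on each $[0,N]$, stitches the solutions by ODE uniqueness, and identifies the limits on overlaps through the projections $P_j$. Second, you pass to the limit on basis tests $e_k$ and then extend by density, whereas the paper tests directly with $P^V_{m_k}\phi\to\phi$.
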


The next two statements give the finite-interval result on \(Q\) used in the
proof and its forced counterpart.  Write
\(Q=[a_1,b_1]\times[a_2,b_2]\), with \(a_i<b_i\).
Let \(\mathcal D_\sigma(Q)\) consist of pairs \((u,Du)\) such that
\(u\in C_c^1(\mathbb R^2;\mathbb R^2)\),
\(\operatorname{supp}u\subseteq\overline Q\), \(Du\) agrees
with the Fr\'echet derivative of \(u\) on \(Q^\circ\), \(u\) is zero on every
face, \(\operatorname{tr}Du=0\) on \(Q\), and \(u,Du\in L^2(Q)\).  Define
\begin{align*}
  H_Q
  &:={\overline{\operatorname{span}
      \{u:u\in\mathcal D_\sigma(Q)\}}}^{L^2(Q;\mathbb R^2)},\\
  V_Q
  &:={\overline{\operatorname{span}
      \{(u,Du):u\in\mathcal D_\sigma(Q)\}}}
      ^{L^2(Q;\mathbb R^2)\times L^2(Q;\mathbb R^{2\times2})}.
\end{align*}
First-coordinate projection gives \(J_Q:V_Q\to H_Q\), and second-coordinate
projection gives the closed gradient
\(G_Q:V_Q\to L^2(Q;\mathbb R^{2\times2})\).  The graph norm satisfies
\[
  \norm{v}_{V_Q}^2
  =\norm{J_Qv}_{H_Q}^2+\norm{G_Qv}_{L^2(Q)}^2.
\]
The map \(J_Q\) is compact, injective, and has dense range.  The same dense
class determines a bounded skew convection form
\(b_Q:V_Q^3\to\mathbb R\) representing
\(\int_Q((u\cdot\nabla)v)\cdot w\) on smooth fields.

For \(I=[a,b]\), set
\[
  R_I(u_0)=\sqrt{1+\lvert I\rvert}\,\norm{u_0}_{H_Q}.
\]

\begin{theorem}[Unforced two-dimensional Leray--Hopf solution]
\label{thm:box-energy-weak-limit}
Let \(a\leq b\) and \(u_0\in H_Q\).  There are
\[
  u_H\in L^2(I;H_Q),
  \qquad
  u_V\in L^2(I;V_Q),
  \qquad
  u_c:I\to H_Q
\]
such that
\[
  J_Q u_V=u_H\quad\text{in }L^2(I;H_Q),
  \qquad
  u_c(t)=u_H(t)\quad\text{for almost every }t\in I,
\]
\[
  u_H\in L^\infty(I;H_Q),
  \qquad
  \norm{u_H}_{L^\infty(I;H_Q)}\leq\norm{u_0}_{H_Q},
  \qquad
  \norm{u_V}_{L^2(I;V_Q)}\leq R_I(u_0).
\]
For every \(y\in H_Q\), the scalar map
\(t\mapsto (u_c(t),y)_{H_Q}\) is continuous.  Moreover,
\[
  u_c(a)=u_0,
  \qquad
  \norm{u_c(t)}_{H_Q}\leq\norm{u_0}_{H_Q}
  \quad(t\in I),
\]
and, for every \(t\in I\),
\[
  \norm{u_c(t)}_{H_Q}^2
  +2\int_a^t
    \norm{G_Qu_V(s)}_{L^2(Q;\mathbb R^{2\times2})}^2\,ds
  \leq \norm{u_0}_{H_Q}^2.
\]
In particular, for every \(\phi\in V_Q\) and every scalar time test
\(\eta\in C^1(I)\) satisfying \(\eta(b)=0\),
\begin{align*}
  &-\int_I (u_H(t),J_Q\phi)_{H_Q}\eta'(t)\,dt
   +\int_I (G_Qu_V(t),G_Q\phi)_{L^2(Q)}\eta(t)\,dt\\
  &\qquad
   +\int_I b_Q(u_V(t),u_V(t),\phi)\eta(t)\,dt
   =(u_0,J_Q\phi)_{H_Q}\eta(a).
\end{align*}
\end{theorem}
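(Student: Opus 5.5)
The plan is the spectral Galerkin scheme outlined in the introduction. First I use compactness, injectivity and dense range of \(J_Q\), together with the compact self-adjoint spectral theorem for \(J_Q^*J_Q\), to build paired orthonormal bases \((e_k)\) of \(V_Q\) and \((f_k)\) of \(H_Q\). These satisfy \(J_Qe_k=\sigma_kf_k\) with \(\sigma_k>0\). I fix an increasing exhaustion \(F_1\subset F_2\subset\cdots\) of the index set and put \(H_n=\operatorname{span}\{f_k:k\in F_n\}\). Since each \(f_k\) lifts exactly to \(e_k/\sigma_k\), the orthogonal projection \(P_n\) onto \(H_n\) is compatible with the \(V_Q\) structure. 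By Riesz representation I write the Galerkin equation as a finite-dimensional ODE: \(u_n'=-A_nu_n-B_n(u_n,u_n)\) with \(u_n(a)=P_nu_0\), where \(A_n\) comes from \((G_Q\cdot,G_Q\cdot)\) and \(B_n\) from \(b_Q\). The vector field is locally Lipschitz, hence polynomial in the coefficients. Skewness \(b_Q(u,u,u)=0\) gives the exact energy identity
\[
  \norm{u_n(t)}_{H_Q}^2+2\int_a^t\norm{G_Qu_n}^2\,ds=\norm{P_nu_0}_{H_Q}^2\le\norm{u_0}_{H_Q}^2 .
\]
This rules out blow-up, so \(u_n\) exists on all of \(I\). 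It also gives the \(L^\infty(I;H_Q)\) bound and, adding \(\int_I\norm{J_Qu_n}^2\le\lvert I\rvert\norm{u_0}^2\), the bound \(\norm{u_n}_{L^2(I;V_Q)}\le R_I(u_0)\).

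Next come the time-derivative bounds. Ladyzhenskaya's inequality \(\norm{u}_{L^4}^2\lesssim\norm{u}_{L^2}\norm{\nabla u}_{L^2}\) gives \(\lvert b_Q(u,u,\phi)\rvert\lesssim\norm{u}_{H}\norm{G_Qu}\norm{\phi}_{V}\). Hence \(u_n'\) is bounded in \(L^2(I;V_Q')\) uniformly in \(n\). For each fixed finite set \(F_m\), the coordinates \((u_n(t),f_k)\) with \(k\in F_m\) are therefore uniformly bounded and uniformly \(\tfrac12\)-Hölder in \(t\), since \(f_k=J_Q(e_k/\sigma_k)\) is a valid test.

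The compactness step is where I expect the main obstacle. I would apply Arzelà--Ascoli to the low modes in each \(F_m\) and then pass to a diagonal subsequence. This gives uniform convergence of every finite projection \(P_mu_n\). For the high modes, compactness of \(J_Q\) gives \(\norm{(I-P_m)J_Q}_{V_Q\to H_Q}\to0\) in operator norm, so
\[
  \int_I\norm{(I-P_m)u_n}_{H_Q}^2\le\norm{(I-P_m)J_Q}^2R_I(u_0)^2\to0
\]
uniformly in \(n\). Together these make \(u_n\) Cauchy in \(L^2(I;H_Q)\). Along the same subsequence I extract weak limits \(u_V\) in \(L^2(I;V_Q)\) and weak-\(*\) limits in \(L^\infty(I;H_Q)\), and identify \(J_Qu_V=u_H\) by continuity of \(J_Q\). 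The delicate bookkeeping is making one subsequence serve all these purposes at once.

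Passage to the limit in the weak equation works as follows. I fix \(\phi=e_k/\sigma_k\)-type tests, multiply by \(\eta\) and integrate by parts in time, which is legitimate because \(\eta(b)=0\). The linear terms converge weakly. For the convection term I use Ladyzhenskaya interpolation plus strong \(L^2H\) convergence and the \(L^2V\) bound to get strong \(L^2(I;L^4)\) convergence, so \(b_Q(u_n,u_n,\phi)\to b_Q(u_V,u_V,\phi)\) in \(L^1(I)\). The initial term \((P_nu_0,\phi)\to(u_0,\phi)\). Density of finite spans and boundedness of each term in \(\phi\in V_Q\) extend the identity to all \(\phi\).

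For the continuous representative, uniform convergence of the finite-mode coordinates plus the uniform \(H\)-bound yields, via density of the \(f_k\), a limit \(u_c(t)\) defined for every \(t\). This limit is weakly continuous, equals \(u_H\) a.e., and satisfies \(u_c(a)=\lim P_nu_0=u_0\). Finally, for each fixed \(t\), \(u_n(t)\rightharpoonup u_c(t)\) weakly in \(H_Q\), and \(G_Qu_n\rightharpoonup G_Qu_V\) weakly in \(L^2([a,t];L^2)\). Weak lower semicontinuity of the norm in the product of these spaces, applied to the energy identity, gives the energy inequality at every \(t\), and with it \(\norm{u_c(t)}\le\norm{u_0}\).
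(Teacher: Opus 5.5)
Your proposal is correct and follows essentially the same route as the paper. The shared steps are: singular-value bases from $J_Q^*J_Q$; the exact Galerkin energy identity; a Ladyzhenskaya-based uniform $V_Q'$ derivative bound giving $1/2$-H\"older control of each fixed spectral mode; Arzel\`a--Ascoli together with the operator-norm tail $\|(I-P_m)J_Q\|_{V_Q\to H_Q}\to0$ for strong $L^2(I;H_Q)$ convergence; strong $L^2(I;L^4)$ convergence to pass the convection term; and weak lower semicontinuity in the product of the velocity and restricted-gradient spaces for the energy inequality at every time.

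The only differences are cosmetic. You pass to the limit with fixed basis tests $g_k=e_k/\sigma_k$ and then extend by density, using that each limiting term is bounded in $\phi\in V_Q$. The paper instead tests directly with $P^V_{m_k}\phi\to\phi$ strongly in $V_Q$. You also get the $L^\infty$ bound from weak-$*$/pointwise limits rather than convergence in measure. Both versions work.
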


For the forced statement, fix \(C_Q\geq0\) such that
\begin{equation}\label{eq:main-box-poincare}
  \norm{J_Qv}_{H_Q}\leq C_Q\norm{G_Qv}_{L^2(Q)}
  \qquad(v\in V_Q),
\end{equation}
whose existence is established in \cref{prop:box-poincare}.  A continuous map
\(\mathcal F:\mathbb R\to V_Q'\) is called an \emph{admissible forcing} if
there are \(r\in C(\mathbb R;V_Q)\) and a continuous nonnegative function
\(f:\mathbb R\to\mathbb R\), called a \emph{control function}, such that
\[
  \mathcal F(t)(w)=(r(t),w)_{V_Q},
  \qquad
  \lvert\mathcal F(t)(w)\rvert\leq f(t)\norm{w}_{V_Q}
  \qquad(t\in\mathbb R,\ w\in V_Q).
\]
Every \(r\in C(\mathbb R;V_Q)\) defines an admissible forcing with control
function \(f(t)=\norm{r(t)}_{V_Q}\).

\begin{theorem}[Forced two-dimensional Leray--Hopf solution]
\label{thm:forced-leray-hopf}
Let \(a\leq b\), let \(u_0\in H_Q\), and let \(\mathcal F\) be an admissible
forcing with control function \(f\).  Put
\[
  R=\Bigl(\norm{u_0}_{H_Q}^2
    +(1+C_Q^2)\int_If(t)^2\,dt\Bigr)^{1/2}.
\]
Then there are
\[
  u_H\in L^2(I;H_Q),
  \qquad
  u_V\in L^2(I;V_Q),
  \qquad
  u_c:I\to H_Q
\]
such that
\[
  J_Qu_V=u_H,
  \qquad
  u_c(t)=u_H(t)\quad\text{for almost every }t\in I,
\]
\[
  u_H\in L^\infty(I;H_Q),
  \qquad
  \norm{u_H}_{L^\infty(I;H_Q)}\leq R,
  \qquad
  \norm{G_Qu_V}_{L^2(I;L^2(Q))}\leq R,
\]
and \(\norm{u_V}_{L^2(I;V_Q)}\leq\sqrt{1+\lvert I\rvert}\,R\).  For every
\(y\in H_Q\), the scalar map \(t\mapsto(u_c(t),y)_{H_Q}\) is continuous, and
\[
  u_c(a)=u_0,
  \qquad
  \norm{u_c(t)}_{H_Q}\leq R
  \quad(t\in I).
\]
For every \(t\in I\),
\begin{equation}\label{eq:forced-energy-inequality}
  \norm{u_c(t)}_{H_Q}^2
  +2\int_a^t\norm{G_Qu_V(s)}_{L^2(Q)}^2\,ds
  \leq\norm{u_0}_{H_Q}^2
    +2\int_a^t\mathcal F(s)\bigl(u_V(s)\bigr)\,ds,
\end{equation}
and for every \(\phi\in V_Q\) and every scalar time test
\(\eta\in C^1(I)\) satisfying \(\eta(b)=0\),
\begin{equation}\label{eq:forced-weak-equation}
\begin{aligned}
  &-\int_I (u_H(t),J_Q\phi)_{H_Q}\eta'(t)\,dt
   +\int_I (G_Qu_V(t),G_Q\phi)_{L^2(Q)}\eta(t)\,dt\\
  &\qquad
   +\int_I b_Q(u_V(t),u_V(t),\phi)\eta(t)\,dt
   =(u_0,J_Q\phi)_{H_Q}\eta(a)
    +\int_I\mathcal F(t)(\phi)\eta(t)\,dt .
\end{aligned}
\end{equation}
\end{theorem}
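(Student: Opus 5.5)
The proof runs the same Galerkin scheme as for \cref{thm:box-energy-weak-limit}, with one change: the forcing enters the finite-dimensional ODE and the energy estimate. Only the a priori bound and the passage to the limit in the forcing terms are new; the compactness machinery is reused unchanged.

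\emph{Galerkin approximations.} Use the paired Hilbert bases of \(V_Q\) and \(H_Q\) from the spectral theorem applied to \(J_Q^*J_Q\), the increasing finite index sets, the finite-dimensional subspaces with their exact lifts to \(V_Q\), and the test projections. For each index \(n\), Riesz representation gives a coefficient ODE whose right-hand side carries the extra term \(\mathcal F(t)(P_n\,\cdot)\). Since \(\mathcal F(t)(w)=(r(t),w)_{V_Q}\) with \(r\) continuous, this term is continuous in \(t\), so Picard--Lindel\"of gives a local solution with initial datum the projection of \(u_0\).

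\emph{Energy estimate.} Transport skew-symmetry gives
\[
\tfrac{d}{dt}\norm{u_n}^2+2\norm{G_Qu_n}^2=2\mathcal F(t)(u_n).
\]
Bound the forcing by \(2f\norm{u_n}_{V_Q}\). Then use \cref{eq:main-box-poincare} to get \(\norm{u_n}_{V_Q}^2\le(1+C_Q^2)\norm{G_Qu_n}^2\), and apply Young's inequality:
\[
2f\norm{u_n}_{V_Q}\le \norm{G_Qu_n}^2+(1+C_Q^2)f^2.
\]
Integrating yields
\[
\norm{u_n(t)}^2+\int_a^t\norm{G_Qu_n}^2\le R^2 .
\]
This gives the \(L^\infty(I;H_Q)\) bound, the bound \(\norm{G_Qu_V}_{L^2}\le R\), and global existence on \(I\). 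Adding \(\int_I\norm{J_Qu_n}^2\le\lvert I\rvert R^2\) gives \(\norm{u_n}_{L^2(I;V_Q)}\le\sqrt{1+\lvert I\rvert}\,R\). The Ladyzhenskaya estimate yields the uniform \(L^2(0,T;V_Q')\) bound on \(\partial_tu_n\) as before, now with the additional term \(f\in L^2(I)\).

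\emph{Limit.} The spectral compactness argument then gives one subsequence along which \(u_n\) converges strongly in \(L^2(I;H_Q)\) and weakly in \(L^2(I;V_Q)\), and strongly in \(L^2(I;L^4)\). Pass to the limit in the weak equation: the linear and convection terms are handled exactly as before, and \(\int\mathcal F(t)(P_n\phi)\eta\to\int\mathcal F(t)(\phi)\eta\) since \(P_n\phi\to\phi\) in \(V_Q\) and \(f\) is integrable. The weakly continuous representative \(u_c\), with \(u_c(a)=u_0\), is built by the same diagonal construction.

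\emph{Main obstacle: the energy inequality \cref{eq:forced-energy-inequality}.} Its right side contains \(2\int_a^t\mathcal F(s)(u_n(s))\,ds=2\int_a^t(r(s),u_n(s))_{V_Q}\,ds\). For fixed \(t\), this is a continuous linear functional of \(u_n\) on \(L^2(a,t;V_Q)\), because \(r\) is bounded on the compact interval. It therefore converges under weak \(L^2(I;V_Q)\) convergence, which is why the admissible-forcing definition demands a Riesz representative \(r\). The left side is handled as in the unforced case. First take weak lower semicontinuity of the pair \((u_n(t),G_Qu_n\mathbf 1_{[a,t]})\) at times where \(u_n(t)\rightharpoonup u_c(t)\), which holds for every \(t\) once the diagonal subsequence gives pointwise weak convergence via finite-mode equicontinuity. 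Then use weak lower semicontinuity of the norm. The delicate point is ensuring this pointwise weak convergence at every \(t\) rather than almost every \(t\). I would first try to get it from the equicontinuity of the coefficient maps together with the uniform \(H_Q\) bound, which reduces it to convergence against the dense span of basis vectors. Finally, the bound \(\norm{u_c(t)}\le R\) follows from the energy inequality by the same Young argument applied to the limit.
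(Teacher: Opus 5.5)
Your proposal is correct and follows the paper's proof essentially step for step. The shared steps are: absorbing the forcing work through the Poincar\'e and Young inequalities to get the Galerkin bounds with radius $R$; reusing the spectral compactness and the diagonal weak-continuity construction; passing the forcing term in the weak equation via $\norm{P^V_{m_k}\phi-\phi}_{V_Q}\int_I f\lvert\eta\rvert$; and treating the accumulated work $\int_a^t (r(s),u_k(s))_{V_Q}\,ds$ as a fixed bounded functional on $L^2(I;V_Q)$, combined with weak lower semicontinuity of the velocity--gradient pair at every $t$. The differences are minor: you recover $\norm{u_c(t)}_{H_Q}\le R$ from the limiting energy inequality plus absorption, whereas the paper gets it directly from pointwise weak lower semicontinuity, and the factor $\sqrt2$ on the gradient component of your pair (or two separate liminf inequalities added together) is needed to get the coefficient $2$.
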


\begin{remark}[Identification with classical spaces]
\label{rem:standard-spaces}
Each field in the defining class is globally \(C^1\) and supported in
\(\overline Q\); its second component agrees with the classical derivative in
the interior, and boundary values of the derivative have no effect on its
\(L^2\) class.  An inward dilation of \(Q\), followed by mollification,
approximates each such solenoidal field in
\(H^1\) by elements of \(C^\infty_{c,\sigma}(Q^\circ)\).  Conversely, these
smooth compactly supported fields satisfy the defining conditions after
extension by zero.  The two dense classes therefore have the same graph
closure.  The first-coordinate
map identifies \(V_Q\) isometrically with
\(H^1_{0,\sigma}(Q^\circ)\), equipped with its velocity--gradient graph norm,
and identifies \(H_Q\) with \(L^2_\sigma(Q^\circ)\).  The equivalent
weak-divergence and normal-trace characterizations belong to the classical
density and trace theory; see~\cite[Chapter~I]{Temam1977}.
\end{remark}

\section{Outline of the Proof}\label{sec:proof-outline}

The main analytic ingredients and the logical steps of the proof are
summarized in
\cref{fig:analytic-ingredients,fig:leray-hopf-proof-chain}.

\begin{figure}[htbp]
  \centering
  \begin{tikzpicture}[
    box/.style={
      draw=black!75,
      rounded corners=2pt,
      align=center,
      font=\footnotesize,
      text width=2.35cm,
      minimum height=1.05cm,
      inner sep=3pt
    },
    concrete/.style={box, fill=white},
    abstract/.style={box, fill=black!10},
    flow/.style={-{Stealth[length=2.2mm]}, line width=0.45pt}
  ]
    \node[concrete] (rellich) {Scalar Fourier\\Rellich theorem};
    \node[concrete, right=4mm of rellich] (embedding)
      {Compact embedding\\$J_Q:V_Q\to H_Q$};
    \node[abstract, right=4mm of embedding] (gram)
      {Compact Gram operator\\$J_Q^*J_Q$};
    \node[abstract, right=4mm of gram] (spectral)
      {Spectral theorem\\and Hilbert bases};
    \node[abstract, right=4mm of spectral] (coordinates)
      {Singular-value bases\\and nested finite sets};

    \node[concrete, above=9mm of rellich] (core)
      {Compactly supported $C^1$\\solenoidal fields};
    \node[concrete, above=9mm of gram] (cancellation)
      {Transport\\cancellation};
    \node[concrete, above=9mm of coordinates] (convection)
      {Bounded skew\\convection form};

    \node[concrete, below=9mm of embedding] (lady)
      {Ladyzhenskaya\\estimate on $Q$};
    \node[concrete, below=9mm of spectral] (dual)
      {Uniform $V_Q'$\\derivative bound};

    \draw[flow] (core) -- (cancellation);
    \draw[flow] (cancellation) -- (convection);
    \draw[flow] (rellich) -- (embedding);
    \draw[flow] (embedding) -- (gram);
    \draw[flow] (gram) -- (spectral);
    \draw[flow] (spectral) -- (coordinates);
    \draw[flow] (lady) -- (dual);
  \end{tikzpicture}
  \caption{Analytic ingredients.  White boxes mark constructions and estimates
  first established on rectangles and then used on bounded open domains;
  gray boxes are abstract Hilbert-space arguments.  The rows record transport
  cancellation, the compact-embedding construction, and the time-derivative
  estimate.}
  \label{fig:analytic-ingredients}
\end{figure}

\begin{figure}[htbp]
  \centering
  \begin{tikzpicture}[
    box/.style={
      draw=black!75,
      rounded corners=2pt,
      align=center,
      font=\footnotesize,
      text width=2.45cm,
      minimum height=1.05cm,
      inner sep=3pt
    },
    concrete/.style={box, fill=white},
    abstract/.style={box, fill=black!10},
    conclusion/.style={
      box,
      fill=white,
      line width=0.85pt,
      text width=3.1cm
    },
    flow/.style={-{Stealth[length=2.2mm]}, line width=0.45pt}
  ]
    \node[concrete] (spaces) {Galerkin spaces\\$H_m$};
    \node[abstract, right=7mm of spaces] (ode)
      {Coefficient\\ODE};
    \node[abstract, right=7mm of ode] (identity)
      {Exact energy\\identity};
    \node[concrete, right=7mm of identity] (global)
      {Full-interval\\Galerkin solutions};

    \node[concrete, below=9mm of global] (bounds)
      {Uniform energy and\\derivative bounds};
    \node[abstract, left=7mm of bounds] (compactness)
      {Space--time\\compactness};

    \node[concrete, below left=10mm and 7mm of compactness] (nonlinear)
      {Convection\\limit};
    \node[concrete, below=10mm of compactness] (representative)
      {Weakly continuous\\representative};
    \node[concrete, below right=10mm and 7mm of compactness] (inequality)
      {Energy inequality\\for every time};
    \node[conclusion, below=11mm of representative] (theorem)
      {Leray--Hopf\\existence theorem};

    \draw[flow] (spaces) -- (ode);
    \draw[flow] (ode) -- (identity);
    \draw[flow] (identity) -- (global);
    \draw[flow] (global) -- (bounds);
    \draw[flow] (bounds) -- (compactness);
    \draw[flow] (compactness) -- (nonlinear);
    \draw[flow] (compactness) -- (representative);
    \draw[flow] (compactness) -- (inequality);
    \draw[flow] (nonlinear) -- (theorem);
    \draw[flow] (representative) -- (theorem);
    \draw[flow] (inequality) -- (theorem);
  \end{tikzpicture}
  \caption{Outline of the Leray--Hopf proof.  The exact finite-dimensional
  energy identity extends the local solutions to the full interval and
  yields uniform bounds.  A single subsequence yields the nonlinear limit, the
  weakly continuous representative, and the lower-semicontinuity argument for
  the energy inequality.  These steps prove
  \cref{thm:box-energy-weak-limit}; restriction to bounded open domains and
  consistency across finite horizons give \cref{thm:open-domain-global}.}
  \label{fig:leray-hopf-proof-chain}
\end{figure}

The spaces \(V_Q\) and \(H_Q\) are obtained by completing compactly supported
\(C^1\), divergence-free fields in the velocity--gradient and velocity norms.
Transport integration by parts gives skew-symmetry on this dense subspace,
while continuity extends the convection form to \(V_Q\).  In two
dimensions, the Gagliardo--Nirenberg--Sobolev inequality applied to
\(\lvert u\rvert^2\) gives the Ladyzhenskaya estimate and identifies the
extended form with the classical convection integral.  Compactness of
\(J_Q\) follows by reducing to a scalar zero-trace space and approximating its
embedding in operator norm by low-frequency Fourier projections.

For each \(m\), the paired spectral bases define a lift
\(E_m:H_m\to V_Q\) satisfying
\(J_QE_m=\operatorname{id}_{H_m}\) and a test projection compatible with the
\(H_Q\)-coordinates.  Restricting diffusion and convection to \(H_m\) gives a
finite-dimensional variational problem.  Local Picard solutions continue to
the full interval because skew-symmetry yields an exact energy identity and a
uniform \(H_Q\)-bound.

The energy identity controls the lifted solutions in \(L^2(I;V_Q)\) and their images in
\(L^\infty(I;H_Q)\).  The Ladyzhenskaya estimate bounds the time derivative
in \(L^2(I;V_Q')\), and coordinate compatibility turns this dual estimate
into a uniform \(1/2\)-H\"older modulus for every fixed finite-dimensional
\(H_Q\)-projection.
Finite-dimensional Arzel\`a--Ascoli compactness and the uniform spectral-tail estimate
then give strong \(L^2(I;H_Q)\) convergence.  Hilbert weak compactness gives
the simultaneous weak \(L^2(I;V_Q)\) limit, and continuity of the embedding
identifies its image with the strong \(H_Q\)-limit.

Finally, interpolation yields strong \(L^2(I;L^4(Q))\) convergence, which
permits passage to the limit in the convection term.  Uniform convergence of
all fixed finite projections
determines a weakly continuous representative with the prescribed initial
value.  At each time, the Galerkin value and its time-restricted gradient form
a vector in a Hilbert direct sum.  Applying weak lower semicontinuity to this
vector yields the inequality in \cref{thm:box-energy-weak-limit} from the
exact finite-dimensional energy identity.

For a bounded open set \(\Omega\), zero extension gives the corresponding
compact injection and convection estimates.  Applying
the finite-interval construction on \([0,N]\) for each integer \(N\), then
identifying the weakly continuous representatives on overlaps, yields the
global path in \cref{thm:open-domain-global}.

\section{Functional-Analytic Construction}\label{sec:formalization-design}

Four structures connect the spatial model, finite-dimensional dynamics, and
compactness argument: a compact dense injection, a bounded skew trilinear
form, compatible finite-rank projections, and estimates uniform in the
Galerkin dimension.

\subsection{Graph Closures and the Spatial Model}

The energy space is a closed subspace of the Hilbert product of velocity
and gradient \(L^2\) spaces.  Completeness and the graph norm are inherited
from that product; the two coordinate projections give the velocity embedding
and gradient.  Injectivity of the velocity projection follows from integration
by parts on the defining dense subspace and continuity on the graph closure,
so a zero velocity has zero weak gradient.  The same argument transfers the
divergence constraint.  Consequently, elements of \(V_Q\) have a well-defined
weak gradient and are weakly divergence-free.

Mathlib's closed-subspace and \(L^p\) theory provide the Hilbert structure.
The abstract Galerkin hypotheses are stated in terms of \(J_Q\), \(G_Q\),
their norm identity, compactness, and the convection bounds.  The classical
identification in \cref{rem:standard-spaces} relates the graph model to the
usual Sobolev spaces.

The convection form is first defined on the defining dense subspace and then
extended to the energy completion.  The continuous map
\(R_Q:V_Q\to L^4(Q;\mathbb R^2)\) and the Ladyzhenskaya estimate make this
extension continuous and identify it with the transport integral.
Skew-symmetry extends by density.  Consequently,
the same continuous trilinear form on \(V_Q^3\) appears in every Galerkin
system and in the limiting equation.

\subsection{Spectral Bases and Finite-Dimensional Spaces}

The spectral construction starts from the compact injection \(J:V\to H\).
The positive compact operator \(J^*J\) has an orthonormal basis \(e_i\) of
\(V\); normalizing \(Je_i\) gives an orthonormal basis \(f_i\) of \(H\) and
positive weights
\(\sigma_i\) satisfying \(Je_i=\sigma_i f_i\).  These data construct the
Galerkin spaces without first constructing a differential Stokes operator
or proving elliptic regularity for its eigenfunctions.

The two bases serve different purposes.  The \(H\)-basis gives the kinetic
energy inner product used by the coefficient ODE.  The \(V\)-basis gives
contractive test projections and the gradient bounds.  Their compatibility,
\[
  JP_m^V=P_m^HJ,
\]
shows that the \(V\)- and \(H\)-projections commute with \(J\).  Choosing
nested finite subsets whose union is the full index set gives the same
construction for finite- and infinite-dimensional Hilbert spaces.  The
singular-value construction
depends only on the two Hilbert spaces and the compact dense injection.  For
\(Q\) the injection is \(J_Q\); for \(\Omega\), it is the corresponding
restriction to the closed solenoidal graph space.

\subsection{Time-Dependent Spaces and Compactness}

The interval \(I=[a,b]\) carries restricted Lebesgue measure.  Continuous
Galerkin solutions are bounded and therefore define Bochner \(L^p\) classes.
Continuous linear spatial maps induce continuous maps between these Bochner
spaces.  In particular, the operator induced by \(J_Q\) identifies the
\(L^2(I;H_Q)\)-limit with \(J_Qu_V\), while the weakly continuous
representative agrees with this class almost everywhere.

Every scalar time test in \(C^1(I)\) and its derivative belong to the required
Bochner spaces.  Multiplication by a fixed spatial vector defines the
corresponding test functions in the finite-dimensional identities and in the
limiting weak formulation.

Compactness follows from two estimates.  Each fixed finite projection is
equicontinuous and bounded, so Arzel\`a--Ascoli gives a uniformly convergent
subsequence.
Compactness of \(J\) gives
\(\|J-P_m^HJ\|_{V\to H}\to0\), which controls the high-mode remainder in
\(L^2(I;H)\) by the \(L^2(I;V)\) bound.  A diagonal subsequence therefore
gives strong convergence in \(L^2(I;H)\).  Weak Hilbert-space compactness of
the lifted solutions and a further subsequence give the strong and weak
limits simultaneously.
The limits of the finite-dimensional projections determine the weakly
continuous representative.

The energy inequality uses the same subsequence.  For fixed \(t\),
restriction \(R_t\) to \([a,t]\) is bounded on the time-dependent gradient
space.  The pair
\[
  \bigl(u_n(t),\sqrt{2}\,R_tG_Q u_{V,n}\bigr)
\]
converges weakly in the product of the velocity and gradient Hilbert spaces.
Its squared norm is the kinetic energy plus accumulated dissipation.  Weak lower
semicontinuity of the product norm yields both terms simultaneously and
preserves the initial energy bound at every time.

\subsection{Abstract Hilbert-Space Results and Forcing}

Four arguments are stated independently of the domain: extension of a
trilinear form from a dense subspace, singular-value bases for compact
Hilbert-space injections, continuation of variational Galerkin ODEs from the
energy estimate, and a compactness criterion based on spectral projections.
The scalar
Fourier Rellich argument proves the compact embedding on rectangular
domains.  The two-dimensional specialization enters through the
Ladyzhenskaya estimate and the ensuing uniform \(V_Q'\)-bound on the time
derivatives.

For every \(m\), forcing acts in the fixed dual space \(V_Q'\), so the
finite-dimensional systems, the uniform dual estimate, and the compactness
argument use the same class of forcing terms.  For nonzero forcing,
continuation requires an integrable bound on the forcing work that is
independent of the solution.  The Poincar\'e inequality gives such a bound.
It follows from the Gagliardo--Nirenberg--Sobolev estimate applied to the
field itself rather than to its square.  Since both
coordinate projections are continuous, the inequality defines a closed
subset of the ambient Hilbert product and therefore extends from the defining
smooth class to its closure.
For an admissible forcing, the continuous Riesz representative identifies
the accumulated work with a bounded functional on \(L^2(I;V_Q)\).
Restriction to an initial segment defines another bounded functional on this
space, so weak convergence passes the forcing work to the limit.  Weak lower
semicontinuity of the velocity--gradient norm then yields the forced energy
inequality at every time.

Mathlib supplies the divergence theorem on rectangles, Gagliardo--Nirenberg--Sobolev
inequality, Fourier series and Parseval identity on the multidimensional
torus, compact self-adjoint spectral theorem, Riesz representation,
Picard--Lindel\"of theorem, Arzel\`a--Ascoli theorem, and the underlying
completeness and Bochner-integration theory of \(L^p\) spaces.

\section{Spatial Estimates on the Auxiliary Rectangle}

\subsection{Transport Cancellation}

The energy method rests on the transport integration-by-parts identity.  For
smooth vector fields on \(Q\), with outward unit normal \(n\) on
its faces, the product rule gives the balance
\begin{equation}\label{eq:transport-balance}
  \int_Q (u \cdot \nabla)v \cdot w
  +
  \int_Q v \cdot (u \cdot \nabla)w
  +
  \int_Q (\nabla\cdot u)\, v\cdot w
  =
  \int_{\partial Q} (u\cdot n)\,v\cdot w\,dS .
\end{equation}
When \(u\) is divergence-free and the boundary flux in
\cref{eq:transport-balance} vanishes, this becomes the skew identity
\begin{equation}\label{eq:transport-skew}
  \int_Q (u \cdot \nabla)v \cdot w
  =
  -
  \int_Q v \cdot (u \cdot \nabla)w.
\end{equation}
Taking $w=v$ gives the energy cancellation
\[
  \int_Q (u \cdot \nabla)v \cdot v = 0.
\]
For Navier--Stokes the transported field is the velocity itself, hence the
cubic cancellation.  The abstract Hilbert-space form is
\[
  \ip{\Bil(u,v)}{w} = - \ip{\Bil(u,w)}{v},
\]
and therefore
\[
  \ip{\Bil(u,u)}{u} = 0.
\]

\Cref{prop:box-transport-ibp} gives this identity under the regularity and
face conditions used in the Galerkin construction.

\subsection{Energy Spaces and Compactness}

For incompressible Navier--Stokes, the no-slip model imposes homogeneous
Dirichlet trace on the velocity field.  On \(Q\), zero face values cancel the
boundary flux term in \cref{eq:transport-balance}.

\begin{proposition}[Transport integration by parts on a rectangular domain]
\label{prop:box-transport-ibp}
Let
\[
  Q=\prod_{i=1}^{d}[a_i,b_i]\subset\mathbb R^d,
  \qquad d\geq 1.
\]
Let \(u\colon Q\to\mathbb R^d\) and \(f\colon Q\to\mathbb R\) be continuous
on \(Q\) and Fr\'echet differentiable on \(Q^\circ\), with specified
derivatives \(Du\) and \(Df\).  Suppose \(f\) vanishes on every face of \(Q\)
and the two functions
\[
  x\longmapsto Df(x)[u(x)],
  \qquad
  x\longmapsto f(x)\sum_{i=1}^{d}(Du(x)e_i)_i
\]
are integrable.  Then
\begin{equation}\label{eq:box-scalar-ibp}
  \int_Q Df(x)[u(x)]\,dx
  =-
  \int_Q f(x)\sum_{i=1}^{d}(Du(x)e_i)_i\,dx.
\end{equation}
Consequently, if \(u,v,w\colon Q\to\mathbb R^d\) have the same regularity,
\(v\) and \(w\) vanish on every face, and
\[
  \sum_{i=1}^{d}(Du(x)e_i)_i=0 \qquad (x\in Q),
\]
then, whenever the displayed transport terms are integrable,
\begin{equation}\label{eq:box-vector-ibp}
  \int_Q Dv(x)[u(x)]\cdot w(x)\,dx
  =-
  \int_Q v(x)\cdot Dw(x)[u(x)]\,dx.
\end{equation}
In particular, a field \(u\) vanishing on \(\partial Q\) and satisfying the divergence-free
hypotheses obeys
\begin{equation}\label{eq:box-cubic-cancel}
  \int_Q Du(x)[u(x)]\cdot u(x)\,dx=0.
\end{equation}
\end{proposition}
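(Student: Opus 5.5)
The plan is to derive \eqref{eq:box-scalar-ibp} from the divergence theorem on rectangles, and then obtain the vector identities by applying it to suitable scalar products. Consider the vector field \(F(x)=f(x)u(x)\colon Q\to\mathbb R^d\). It is continuous on \(Q\) and differentiable on \(Q^\circ\). By the product rule its derivative is
\[
DF(x)[h]=Df(x)[h]\,u(x)+f(x)\,Du(x)[h],
\]
so its divergence is
\[
\sum_i (DF(x)e_i)_i = Df(x)[u(x)] + f(x)\sum_i (Du(x)e_i)_i .
\]
Both summands are integrable by hypothesis, so the divergence is integrable. The divergence theorem on rectangles, which mathlib supplies in the form for a box with continuous field, differentiability on the interior and integrable divergence, then expresses \(\int_Q \operatorname{div}F\) as a sum over \(i\) of differences of integrals of \(F_i\) over the opposite faces \(x_i=b_i\) and \(x_i=a_i\). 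Since \(f\) vanishes on every face, so does \(F=fu\). Every face integral is therefore zero, and splitting the integral of the sum, which is legitimate by integrability, gives \eqref{eq:box-scalar-ibp}.

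For \eqref{eq:box-vector-ibp}, take \(f=v\cdot w=\sum_j v_jw_j\). It is continuous, differentiable on \(Q^\circ\), and vanishes on the faces because \(v\) does. Its derivative is
\[
Df(x)[h]=Dv(x)[h]\cdot w(x)+v(x)\cdot Dw(x)[h],
\]
so
\[
Df(x)[u(x)] = Dv(x)[u(x)]\cdot w(x) + v(x)\cdot Dw(x)[u(x)].
\]
The divergence-free hypothesis makes the right-hand side of \eqref{eq:box-scalar-ibp} the integral of zero, and the integrability hypothesis on that term holds trivially. Integrability of \(Df[u]\) follows from integrability of the two displayed transport terms. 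Hence \(\int_Q Df[u]=0\), and linearity of the integral splits this into \eqref{eq:box-vector-ibp}. Finally, \eqref{eq:box-cubic-cancel} follows by setting \(v=w=u\): \eqref{eq:box-vector-ibp} then states that an integral equals its own negative, using the symmetry of the dot product, so it vanishes.

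The main obstacle is the first step, namely matching the hypotheses to the available divergence theorem. The version on a box needs continuity on the closed box, differentiability off a countable (here empty) set in the interior, and integrability of the divergence. When \(a_i=b_i\) for some \(i\), or when \(d\) is small, degenerate-box cases may need separate handling; if \(Q\) has empty interior the integrals are zero anyway. The face-integral bookkeeping, which restricts \(F\) to the faces via the insertion maps, must also be shown to vanish pointwise; this is straightforward once the vanishing of \(f\) on faces is phrased in terms of those insertions. The remaining steps are routine product-rule and linearity computations.
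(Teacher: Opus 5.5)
Your proposal is correct and follows the paper's proof step for step. You apply the divergence theorem on \(Q\) to \(fu\), using zero boundary flux and \(\operatorname{div}(fu)=Df[u]+f\operatorname{div}u\); then take \(f=v\cdot w\) with \(\operatorname{div}u=0\); and finally set \(v=w=u\). Your added bookkeeping on integrability, face insertions, and degenerate boxes only makes explicit what the paper leaves implicit.
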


\begin{proof}
Apply the divergence theorem on \(Q\) to \(fu\).  The boundary flux is zero and
\(\operatorname{div}(fu)=Df[u]+f\operatorname{div}u\), giving
\cref{eq:box-scalar-ibp}.  Taking \(f=v\cdot w\) and using the product rule
and \(\operatorname{div}u=0\) gives \cref{eq:box-vector-ibp}; setting
\(v=w=u\) gives \cref{eq:box-cubic-cancel}.
\end{proof}

\begin{samepage}
\begin{proposition}[Finite-dimensional transport cancellation]
\label{prop:box-variational-cancellation}
Let \(W\) be a finite-dimensional real Hilbert space and let
\[
  \mathcal E\colon W\longrightarrow
    \{Q\to\mathbb R^d\}
\]
be a linear map with an associated linear derivative map \(D\mathcal E\).
Suppose every field in the range of \(\mathcal E\) is continuous on \(Q\), Fr\'echet
differentiable on \(Q^\circ\), divergence-free, and zero on every face.
Let \(b\colon W^3\to\mathbb R\) be a continuous trilinear form satisfying
\[
  b(a,c,e)
  =\int_Q D(\mathcal E c)(x)[\mathcal E a(x)]
      \cdot\mathcal E e(x)\,dx,
\]
with the displayed integrands integrable.  Then
\[
  b(a,c,e)=-b(a,e,c),
  \qquad
  b(a,a,a)=0.
\]
Consequently, every continuous bilinear diffusion form
\(A\colon W^2\to\mathbb R\) satisfying \(A(a,a)\geq0\), together with a
trilinear bound for \(b\), determines a finite-dimensional variational Galerkin
system with transport cancellation.
\end{proposition}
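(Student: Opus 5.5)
The plan is to reduce the skew identity directly to the vector integration-by-parts formula \cref{eq:box-vector-ibp} of \cref{prop:box-transport-ibp}. Fix \(a,c,e\in W\) and set \(u=\mathcal E a\), \(v=\mathcal E c\), \(w=\mathcal E e\), with specified derivatives \(Du=D\mathcal E a\), \(Dv=D\mathcal E c\), \(Dw=D\mathcal E e\). By hypothesis each field is continuous on \(Q\), Fréchet differentiable on \(Q^\circ\) with the stated derivative, and zero on every face. The transported field \(u\) is divergence-free, which means \(\sum_i (Du(x)e_i)_i=0\). The representing formula gives
\[
b(a,c,e)=\int_Q Dv(x)[u(x)]\cdot w(x)\,dx,
\qquad
b(a,e,c)=\int_Q Dw(x)[u(x)]\cdot v(x)\,dx,
\]
and both integrands are integrable by hypothesis. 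These are exactly the integrability conditions needed in \cref{eq:box-vector-ibp}. That identity yields
\(\int_Q Dv[u]\cdot w=-\int_Q v\cdot Dw[u]\), and the right side is \(-b(a,e,c)\) by symmetry of the dot product. So \(b(a,c,e)=-b(a,e,c)\).

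For the cancellation, take \(c=e=a\) in the skew identity. This gives \(b(a,a,a)=-b(a,a,a)\), hence \(b(a,a,a)=0\). Alternatively one can invoke \cref{eq:box-cubic-cancel} directly.

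The only delicate point is the integrability bookkeeping. Proposition~\cref{prop:box-transport-ibp} also needs integrability of the scalar terms coming from \(f=v\cdot w\), namely \(Df[u]\) and \(f\operatorname{div}u\). The second vanishes identically because \(\operatorname{div}u=0\). The first equals \(Dv[u]\cdot w+v\cdot Dw[u]\), a sum of the two displayed integrands, so it is integrable. I expect this step to be routine, but it is where the hypotheses must be matched carefully.

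The closing \emph{consequently} clause is structural. The finite-dimensional variational system
\[
(\dot a,\cdot)_W+A(a,\cdot)+b(a,a,\cdot)=0
\]
has a locally Lipschitz right side, because \(b\) is continuous and trilinear on a finite-dimensional space, so Picard--Lindelöf applies. Testing the system with \(a\) and using \(b(a,a,a)=0\) together with \(A(a,a)\ge0\) gives \(\tfrac{d}{dt}\|a\|_W^2\le0\). This is the transport cancellation recorded in the statement, and the stated trilinear bound is what later makes the estimates uniform in the Galerkin dimension.
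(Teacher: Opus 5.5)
Your proposal is correct and follows the paper's proof. You apply the vector integration-by-parts identity of the preceding proposition to \(u=\mathcal E a\), \(v=\mathcal E c\), \(w=\mathcal E e\), specialize to \(c=e=a\), and observe that the resulting forms satisfy the variational Galerkin hypotheses; your integrability check for \(f=v\cdot w\) spells out a step the paper leaves implicit.

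One correction to your closing aside. In the paper, the finite-dimensional trilinear bound \(C_b\) may depend on the Galerkin dimension and is used only for local existence. Uniformity in \(m\) comes from the energy identity and the Ladyzhenskaya-based \(V_Q'\) estimate, not from that bound.
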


\begin{proof}
Apply \cref{prop:box-transport-ibp} to
\(u=\mathcal E a\), \(v=\mathcal E c\), and \(w=\mathcal E e\).  The resulting
vector identity is the skew relation for \(b\); its diagonal specialization
is the cancellation identity.  The diffusion form, the continuous trilinear
form and its bound, diffusion nonnegativity, and the derived skew relation
verify the hypotheses of the variational Galerkin system.
\end{proof}
\end{samepage}

\begin{proposition}[Energy and velocity spaces on a rectangular domain]
\label{prop:box-energy-closures}
Let \(\mathcal D_\sigma(Q)\) consist of pairs \((u,Du)\) such that
\(u\in C_c^1(\mathbb R^d;\mathbb R^d)\),
\(\operatorname{supp}u\subseteq\overline Q\), \(Du\) agrees
with the Fr\'echet derivative of \(u\) on \(Q^\circ\), \(u\) is zero on every
face, \(\operatorname{tr}Du=0\) on \(Q\), and \(u,Du\in L^2(Q)\).  Define
\begin{align*}
  H_Q
  &:={\overline{\operatorname{span}\{u:u\in\mathcal D_\sigma(Q)\}}}^{L^2(Q;\mathbb R^d)},\\
  V_Q
  &:={\overline{\operatorname{span}\{(u,Du):u\in\mathcal D_\sigma(Q)\}}}
      ^{L^2(Q;\mathbb R^d)\times L^2(Q;\mathbb R^{d\times d})}.
\end{align*}
Then \(H_Q\) and \(V_Q\) are complete real Hilbert spaces.  First-coordinate
projection induces a continuous linear injection
\[
  J_Q\colon V_Q\lhook\joinrel\longrightarrow H_Q,
  \qquad \norm{J_Qv}_{H_Q}\leq\norm{v}_{V_Q},
\]
with dense range, and second-coordinate projection induces
\(G_Q\colon V_Q\to L^2(Q;\mathbb R^{d\times d})\) with the same contraction
bound.  In fact,
\[
  \norm{v}_{V_Q}^2
  =
  \norm{J_Qv}_{H_Q}^2+\norm{G_Qv}_{L^2}^2.
\]
For every \(g\in C_c^\infty(\mathbb R^d)\) and every \(i,j\),
\[
  \ip{(G_Qv)_{ij}}{g}_{L^2(Q)}
  +
  \ip{(J_Qv)_i}{\partial_jg}_{L^2(Q)}
  =0,
\]
and
\[
  \sum_{i=1}^{d}(G_Qv)_{ii}=0
  \qquad\text{in }L^2(Q).
\]
Consequently \(J_Qv\) is weakly divergence-free.  The form
\[
  A_Q(v,w):=\ip{G_Qv}{G_Qw}_{L^2},
  \qquad
  |A_Q(v,w)|\leq\norm{v}_{V_Q}\norm{w}_{V_Q},
\]
and \(A_Q(v,v)=\norm{G_Qv}_{L^2}^2\geq0\).  The pivot pairing defines a
continuous linear map
\[
  \iota_Q\colon H_Q\longrightarrow V_Q',
  \qquad
  (\iota_Qh)(v)=\ip{h}{J_Qv}_{H_Q},
  \qquad
  |(\iota_Qh)(v)|\leq\norm{h}_{H_Q}\norm{v}_{V_Q}.
\]
If \(\mathcal E\) in
\cref{prop:box-variational-cancellation} takes values in
\(\mathcal D_\sigma(Q)\), then
\[
  a\longmapsto (\mathcal Ea,D(\mathcal Ea))
\]
gives a continuous linear lift \(W\to V_Q\).  Restricting \(A_Q\) through this
lift and using the integral convection form produces a finite-dimensional
variational Galerkin system whose diffusion term is the gradient inner
product and whose convection term has the required cancellation.
\end{proposition}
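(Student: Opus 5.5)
The plan is to work entirely inside the ambient Hilbert product \(\mathcal H:=L^2(Q;\mathbb R^d)\times L^2(Q;\mathbb R^{d\times d})\) and obtain every claimed property by closure arguments. \(V_Q\) and \(H_Q\) are closed subspaces of complete spaces, so they are complete real Hilbert spaces with the inherited inner products. The product norm gives \(\norm{(u,g)}^2=\norm{u}^2+\norm{g}^2\) directly, and this yields the graph-norm identity and both contraction bounds for the coordinate projections. To see that the first projection of \(V_Q\) lands in \(H_Q\), note that the projection is continuous and sends the dense span onto the defining span of \(H_Q\). Hence it sends the closure into the closure. The same observation shows the image contains a dense subset of \(H_Q\), which gives dense range.

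Next I would prove the two distributional identities on generators and extend them by continuity. Fix \(g\in C_c^\infty(\mathbb R^d)\) and indices \(i,j\). The map
\[
  (u,G)\mapsto \ip{G_{ij}}{g}_{L^2(Q)}+\ip{u_i}{\partial_jg}_{L^2(Q)}
\]
is a continuous linear functional on \(\mathcal H\), so its kernel is closed. For a generator \((u,Du)\in\mathcal D_\sigma(Q)\), apply the scalar identity \cref{eq:box-scalar-ibp} of \cref{prop:box-transport-ibp} with \(f=u_i\) and the constant vector field \(e_j\) replaced by \(g e_j\). Since \(u_i\) vanishes on every face and the fields are \(C^1\) with compact support, integrability is automatic. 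The result is \(\int_Q \partial_j u_i\,g=-\int_Q u_i\,\partial_j g\). Linearity then puts the whole span, and hence its closure \(V_Q\), in the kernel. Similarly, \((u,G)\mapsto\sum_i G_{ii}\) is continuous linear into \(L^2(Q)\) and vanishes on generators because \(\operatorname{tr}Du=0\); this gives the trace identity on \(V_Q\). Weak divergence-freeness of \(J_Qv\) follows by summing the first identity over \(i=j\) and using the trace identity.

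Injectivity of \(J_Q\) is then the step needing care. If \(J_Qv=0\), the identity gives \(\ip{(G_Qv)_{ij}}{g}_{L^2(Q)}=0\) for every \(g\in C_c^\infty(\mathbb R^d)\). Restrictions of such \(g\) to \(Q\) are dense in \(L^2(Q)\); in fact \(C_c^\infty(Q^\circ)\) already suffices because \(\partial Q\) is null. So \(G_Qv=0\), and therefore \(v=0\) by the norm identity. The main obstacle here is purely measure-theoretic: the density of test functions in \(L^2\) of the restricted measure, and the handling of \(L^2\) classes on the box. I would take this density from mathlib's general result for \(L^p\), \(p<\infty\), on open sets, applied to \(Q^\circ\).

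The remaining statements are routine. For \(A_Q\), Cauchy--Schwarz gives \(|A_Q(v,w)|\le\norm{G_Qv}\norm{G_Qw}\le\norm v\norm w\), and \(A_Q(v,v)=\norm{G_Qv}^2\ge0\). For \(\iota_Q\), Cauchy--Schwarz together with \(\norm{J_Qv}\le\norm v\) gives the bound, and linearity in \(h\) gives a bounded linear map into \(V_Q'\). For the lift, \(a\mapsto(\mathcal Ea,D(\mathcal Ea))\) is linear, and each value is a generator, so it lies in \(V_Q\). Continuity is automatic because \(W\) is finite-dimensional. Pulling \(A_Q\) back through this lift gives a continuous nonnegative bilinear form on \(W\). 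Pairing it with the integral convection form, which is skew with cancellation by \cref{prop:box-variational-cancellation}, produces the stated finite-dimensional variational Galerkin system.
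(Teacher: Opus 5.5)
Your proposal is correct and follows the paper's proof essentially step by step. The steps match: the Hilbert structure is inherited from closed subspaces of the ambient product, and coordinate integration by parts on the generators (which you make explicit via \cref{eq:box-scalar-ibp} with the vector field \(g e_j\)) extends to \(V_Q\) by continuity. Injectivity comes from density of test functions in \(L^2(Q)\), the trace identity from the closed kernel of the continuous trace map, the bounds for \(A_Q\) and \(\iota_Q\) from Cauchy--Schwarz, and continuity of the lift from finite-dimensionality.
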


\begin{proof}
Closedness gives the Hilbert-space structures.  Coordinate projection
gives \(J_Q\), \(G_Q\), their contraction bounds, and the graph norm identity;
the image of the defining dense subspace is dense in \(H_Q\).  The diffusion and pivot
bounds follow from Cauchy--Schwarz.

For injectivity, coordinate integration by parts on the defining dense subspace gives
\[
  \int_Q (G_Qv)_{ij}\phi+(J_Qv)_i\,\partial_j\phi\,dx=0
  \qquad(\phi\in C_c^\infty(\mathbb R^d)).
\]
Continuity extends this identity to the graph closure.  If \(J_Qv=0\),
density of scalar smooth tests in \(L^2(Q)\) gives \(G_Qv=0\), hence \(v=0\).
The continuous matrix-trace map vanishes on the defining dense subspace and therefore on its
closure; summing the coordinate identities gives weak incompressibility.
Finally, finite-dimensionality makes the map into the graph space continuous,
and \cref{prop:box-variational-cancellation} gives its transport identity.
\end{proof}

\begin{proposition}[Two-dimensional Ladyzhenskaya estimate and convection form]
\label{prop:box-ladyzhenskaya-convection}
Let \(Q\subset\mathbb R^2\) be a nondegenerate rectangle.  There are a
constant \(C_L\geq0\) and a continuous linear map
\[
  R_Q:V_Q\longrightarrow L^4(Q;\mathbb R^2)
\]
such that \(R_Qu=J_Qu\) almost everywhere and
\begin{equation}\label{eq:box-ladyzhenskaya}
  \norm{R_Qu}_{L^4}^2
  \leq C_L\norm{J_Qu}_{H_Q}\norm{G_Qu}_{L^2}.
\end{equation}
The formula
\begin{equation}\label{eq:box-integral-convection}
  b_Q(u,v,w)
  =
  \int_Q\sum_{i,j=1}^2
    (R_Qu)_j\, (G_Qv)_{ij}\, (R_Qw)_i\,dx
\end{equation}
defines a bounded trilinear form on \(V_Q^3\).  It agrees on the defining dense subspace
with
\[
  \int_Q ((u\cdot\nabla)v)\cdot w\,dx,
\]
and there is a constant \(c_Q>0\) such that
\begin{align}
  |b_Q(u,v,w)|
  &\leq
  c_Q\norm{R_Qu}_{L^4}\norm{G_Qv}_{L^2}
       \norm{R_Qw}_{L^4},\label{eq:box-l4-convection-bound}\\
  b_Q(u,v,w)&=-b_Q(u,w,v),\label{eq:box-convection-skew}\\
  \norm{b_Q(u,u,\cdot)}_{V_Q'}
  &\leq c_QC_L\norm{J_Qu}_{H_Q}\norm{u}_{V_Q}.
  \label{eq:box-diagonal-dual}
\end{align}
\end{proposition}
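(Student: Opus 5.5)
The plan is to prove the Ladyzhenskaya inequality first on the defining dense subspace \(\mathcal D_\sigma(Q)\) and then extend everything by density. For \(u\in\mathcal D_\sigma(Q)\), \(u\) is globally \(C^1\) with compact support in \(\overline Q\). We apply the two-dimensional Gagliardo--Nirenberg--Sobolev inequality (available in mathlib), \(\norm{g}_{L^2(\mathbb R^2)}\leq C\norm{\nabla g}_{L^1(\mathbb R^2)}\), to the compactly supported \(C^1\) function \(g=\lvert u\rvert^2\). Since \(\nabla\lvert u\rvert^2=2(Du)^{T}u\), Cauchy--Schwarz gives
\[
  \norm{u}_{L^4}^2=\norm{\lvert u\rvert^2}_{L^2}\leq 2C\norm{u}_{L^2}\norm{Du}_{L^2}.
\]
The boundary of \(Q\) has measure zero and \(u\) vanishes outside \(\overline Q\), so all norms may be taken over \(Q\). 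This is \cref{eq:box-ladyzhenskaya} with \(C_L=2C\) on the dense class. Using \(ab\leq\tfrac12(a^2+b^2)\) turns it into a linear bound \(\norm{u}_{L^4}\leq\sqrt{C_L/2}\,\norm{(u,Du)}_{V_Q}\).

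Next we construct \(R_Q\). The map \((u,Du)\mapsto u\in L^4\) is linear on the span of the defining class, and the linear bound above holds on the span as well, since the GNS argument uses only \(C^1\) regularity with compact support and linear combinations stay in that class. Hence the map extends uniquely to a continuous linear map on the closure \(V_Q\). To show \(R_Qu=J_Qu\) a.e., we use that on finite-measure \(Q\) the inclusion \(L^4\to L^2\) is continuous. The two continuous maps \(V_Q\to L^2\), namely the inclusion composed with \(R_Q\) and \(J_Q\), agree on a dense set and therefore everywhere. The inequality \cref{eq:box-ladyzhenskaya} is a closed condition in \(v\) because \(R_Q\), \(J_Q\), \(G_Q\) are continuous, so it passes to the closure.

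For the convection form we define \(b_Q\) by \cref{eq:box-integral-convection}. Hölder with exponents \((4,2,4)\) and the equivalence of finite-dimensional norms on \(\mathbb R^{2\times 2}\) and \(\mathbb R^2\) give \cref{eq:box-l4-convection-bound} with some \(c_Q\). Composing with the linear bound on \(R_Q\) shows that \(b_Q\) is a bounded trilinear form on \(V_Q^3\). On the dense class, \(\sum_{i,j}u_j\,\partial_jv_i\,w_i=((u\cdot\nabla)v)\cdot w\), and \(R_Q\) and \(G_Q\) reduce to \(u\) and \(Du\), so the two expressions agree.

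Skew-symmetry \cref{eq:box-convection-skew} holds on the dense class by \cref{prop:box-transport-ibp}, equation \cref{eq:box-vector-ibp}. Its hypotheses are met: the defining fields vanish on the faces, have zero trace of \(Du\), and the transport integrands are integrable because they are continuous with compact support. Both sides of \cref{eq:box-convection-skew} are continuous trilinear forms, so the identity extends to the span by trilinearity and then to \(V_Q^3\) by density. Finally, for \cref{eq:box-diagonal-dual} we use skewness to write \(b_Q(u,u,w)=-b_Q(u,w,u)\). Then \cref{eq:box-l4-convection-bound} gives \(\lvert b_Q(u,u,w)\rvert\leq c_Q\norm{R_Qu}_{L^4}^2\norm{G_Qw}_{L^2}\). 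Combining this with \cref{eq:box-ladyzhenskaya}, \(\norm{G_Qw}\leq\norm{w}_{V_Q}\) and \(\norm{G_Qu}\leq\norm{u}_{V_Q}\) yields the stated bound.

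The main obstacle is the first step: checking that GNS applies to \(\lvert u\rvert^2\) for a \(C^1_c\) field whose derivative is only specified as \(Du\) on \(Q^\circ\). This requires the chain-rule computation of \(\nabla\lvert u\rvert^2\), identifying the global Fréchet derivative with \(Du\) almost everywhere (they agree on \(Q^\circ\), and \(\partial Q\) is null), and converting between \(L^p(\mathbb R^2)\) and \(L^p(Q)\) norms. I would isolate this as a scalar lemma about compactly supported \(C^1\) functions on \(\mathbb R^2\) before handling vector fields.
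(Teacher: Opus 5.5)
Your proposal is correct and follows essentially the same route as the paper. You apply GNS to \(\lvert u\rvert^2\) on the defining class, extend the \(L^4\) inclusion by density to \(R_Q\) and identify it with \(J_Q\) through \(L^4(Q)\hookrightarrow L^2(Q)\), use H\"older with exponents \(4,2,4\) for the trilinear bound, use transport integration by parts plus density for skew-symmetry, and combine skew-symmetry with the Ladyzhenskaya estimate for the diagonal dual bound. Your remarks that the inequality is a closed condition and that the specified derivative agrees almost everywhere with the global Fr\'echet derivative spell out details the paper leaves implicit.
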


\begin{proof}
For a compactly supported \(C^1\) vector field \(u\) on \(\mathbb R^2\), apply the
Gagliardo--Nirenberg--Sobolev inequality with exponents \(2\) and \(1\) to
\(q=\lvert u\rvert^2\).  The chain rule and H\"older's inequality give
\[
  \norm{u}_{L^4(\mathbb R^2)}^2
  =\norm{q}_{L^2(\mathbb R^2)}
  \leq C\norm{Dq}_{L^1(\mathbb R^2)}
  \leq 2C\norm{u}_{L^2(\mathbb R^2)}
          \norm{Du}_{L^2(\mathbb R^2)}.
\]
Every finite linear combination of fields in the defining class is \(C^1\), has
support in \(\overline Q\), and has its Fr\'echet derivative represented by
the second component of the graph pair almost everywhere in \(Q\).
Restriction to \(Q\) therefore gives \cref{eq:box-ladyzhenskaya} on the
defining dense subspace, with the fixed
choice of Euclidean matrix norm absorbed into \(C_L\).

The estimate makes the inclusion into \(L^4(Q;\mathbb R^2)\) continuous in
the graph norm, so it extends uniquely by density to \(R_Q\).
Its composition with the continuous inclusion \(L^4(Q)\hookrightarrow
L^2(Q)\) agrees with \(J_Q\) on a dense subspace and hence on all of \(V_Q\).
H\"older's inequality with exponents \(4,2,4\) proves
\cref{eq:box-l4-convection-bound} and the integral representation
\cref{eq:box-integral-convection}.  Transport integration by parts proves
skew-symmetry on the defining dense subspace; continuity and density give
\cref{eq:box-convection-skew} on \(V_Q\).  Finally,
\[
  |b_Q(u,u,\phi)|
  =|b_Q(u,\phi,u)|
  \leq c_Q\norm{R_Qu}_{L^4}^2\norm{G_Q\phi}_{L^2},
\]
and \cref{eq:box-ladyzhenskaya}, together with
\(\norm{G_Qu}_{L^2}\leq\norm{u}_{V_Q}\), proves
\cref{eq:box-diagonal-dual}.
\end{proof}

\begin{proposition}[Poincar\'e inequality and forcing estimate on a rectangle]
\label{prop:box-poincare}
Let \(Q\subset\mathbb R^2\) be a rectangle.  There is a constant
\(C_Q\geq0\) such that
\begin{equation}\label{eq:box-poincare}
  \norm{J_Qv}_{H_Q}\leq C_Q\norm{G_Qv}_{L^2(Q)}
  \qquad (v\in V_Q),
\end{equation}
and consequently
\begin{equation}\label{eq:box-graph-gradient}
  \norm{v}_{V_Q}\leq\sqrt{1+C_Q^2}\,\norm{G_Qv}_{L^2(Q)}
  \qquad (v\in V_Q).
\end{equation}
If \(\mathcal F\in V_Q'\) and \(f\geq0\) satisfies
\(\lvert\mathcal F(w)\rvert\leq f\norm{w}_{V_Q}\) for every \(w\in V_Q\), then
\begin{equation}\label{eq:box-forcing-absorption}
  2\mathcal F(v)
  \leq (1+C_Q^2)f^2+\norm{G_Qv}_{L^2(Q)}^2
  \qquad (v\in V_Q).
\end{equation}
\end{proposition}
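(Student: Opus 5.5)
The plan is to prove the Poincaré inequality \cref{eq:box-poincare} first on the defining dense class and then pass to the closure; \cref{eq:box-graph-gradient} and \cref{eq:box-forcing-absorption} then follow by elementary algebra.

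\emph{Dense class.} Take \(u\) a finite linear combination of fields in \(\mathcal D_\sigma(Q)\). Then \(u\) is globally \(C^1\), compactly supported, and supported in \(\overline Q\). Its derivative agrees almost everywhere on \(Q\) with the second graph component, and vanishes outside \(\overline Q\). I would apply the Gagliardo--Nirenberg--Sobolev inequality with exponents \(2\) and \(1\) to each component \(u_k\) directly, rather than to \(\lvert u\rvert^2\) as in \cref{prop:box-ladyzhenskaya-convection}:
\[
\norm{u_k}_{L^2(\mathbb R^2)}\leq C\norm{Du_k}_{L^1(\mathbb R^2)}.
\]
Since \(Du_k\) is supported in \(\overline Q\), Cauchy--Schwarz on the finite-measure set \(Q\) gives
\[
\norm{Du_k}_{L^1}\leq \lvert Q\rvert^{1/2}\norm{Du_k}_{L^2(Q)}.
\]
Summing over \(k=1,2\) and absorbing the choice of matrix norm into the constant yields
\[
\norm{u}_{L^2(Q)}\leq C_Q\norm{Du}_{L^2(Q)}
\]
on the span of the defining class.

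\emph{Extension to \(V_Q\).} The set
\[
\{v\in V_Q:\norm{J_Qv}\leq C_Q\norm{G_Qv}\}
\]
is closed, because \(J_Q\) and \(G_Q\) are continuous (\cref{prop:box-energy-closures}) and norms are continuous. This set contains the dense span of graph pairs, hence it is all of \(V_Q\).

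\emph{Consequences.} The graph norm identity gives
\[
\norm{v}_{V_Q}^2=\norm{J_Qv}^2+\norm{G_Qv}^2\leq(1+C_Q^2)\norm{G_Qv}^2,
\]
which is \cref{eq:box-graph-gradient}. For the forcing estimate, write \(g=\norm{G_Qv}\). Then
\[
2\mathcal F(v)\leq 2f\norm{v}_{V_Q}\leq 2\bigl(\sqrt{1+C_Q^2}\,f\bigr)g\leq(1+C_Q^2)f^2+g^2
\]
by Young's inequality \(2xy\leq x^2+y^2\).

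\emph{Main obstacle.} The main technical point is the dense-class step. It requires checking that the ambient derivative of \(u\) on \(\mathbb R^2\) has \(L^1\) norm controlled by the \(L^2(Q)\) norm of the specified \(Du\). This holds because \(u\) vanishes outside \(\overline Q\), so its derivative vanishes on the open complement. The boundary \(\partial Q\) is Lebesgue-null, and on \(Q^\circ\) the Fréchet derivative equals the given \(Du\). Once this support bookkeeping is in place, the remaining steps are routine.
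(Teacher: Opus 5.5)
Your proposal is correct and follows essentially the same route as the paper: the Gagliardo--Nirenberg--Sobolev inequality with exponent \(1\) on the span of the defining class, H\"older on \(Q\) using that the derivative vanishes off \(\overline Q\), extension to \(V_Q\) because the inequality is a closed condition, and then the graph norm identity and Young's inequality. The only cosmetic difference is that you apply the inequality to each component and sum, whereas the paper applies it to the vector field directly; this only changes the constant.
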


\begin{proof}
For a compactly supported \(C^1\) vector field \(u\) on \(\mathbb R^2\), the
Gagliardo--Nirenberg--Sobolev inequality with exponent \(1\) gives
\[
  \norm{u}_{L^2(\mathbb R^2)}\leq C\norm{Du}_{L^1(\mathbb R^2)} .
\]
Every finite linear combination of fields in the defining class is \(C^1\) with support in
\(\overline Q\), so its derivative vanishes off \(\overline Q\) as well, and
H\"older's inequality on \(Q\) gives
\(\norm{Du}_{L^1}\leq\lvert Q\rvert^{1/2}\norm{Du}_{L^2(Q)}\).  The resulting
inequality between the two coordinate projections is a closed condition in
the ambient product \(L^2\) space; it therefore passes from the defining
dense subspace to its closure and proves \cref{eq:box-poincare}.  The graph norm identity of
\cref{prop:box-energy-closures} gives \cref{eq:box-graph-gradient}.  Finally,
\[
  2\mathcal F(v)
  \leq 2f\norm{v}_{V_Q}
  \leq 2\bigl(\sqrt{1+C_Q^2}\,f\bigr)\norm{G_Qv}_{L^2}
  \leq (1+C_Q^2)f^2+\norm{G_Qv}_{L^2}^2 .
\]
\end{proof}

\begin{proposition}[Reduction to a vector-valued Rellich embedding]
\label{prop:box-rellich-reduction}
Let
\[
  \widetilde V_Q
  :=
  \overline{\operatorname{span}
    \{(u,Du):u\in C(Q),\ u\text{ is Fr\'echet differentiable on }Q^\circ,
      \ u|_{\partial Q}=0,\ u,Du\in L^2(Q)\}}
\]
in the same product \(L^2\) space, without imposing incompressibility, and let
\[
  \widetilde J_Q:\widetilde V_Q\longrightarrow L^2(Q;\mathbb R^d)
\]
be first-coordinate projection.  If \(\widetilde J_Q\) is compact, then the
canonical divergence-free map \(J_Q:V_Q\to H_Q\) is compact.
\end{proposition}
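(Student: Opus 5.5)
The plan is to factor \(J_Q\) through \(\widetilde J_Q\) via an isometric inclusion, and then use that compact operators form a two-sided ideal.

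\emph{Step 1: inclusion of the defining classes.} Every pair \((u,Du)\) with \(u\in\mathcal D_\sigma(Q)\) also belongs to the generating class of \(\widetilde V_Q\). Indeed, the restriction of \(u\) to \(Q\) is continuous and Fréchet differentiable on \(Q^\circ\), with derivative \(Du\). It vanishes on every face and satisfies \(u,Du\in L^2(Q)\). The defining class of \(\widetilde V_Q\) simply drops the trace and compact-support conditions. Hence the span of the divergence-free pairs lies in the span defining \(\widetilde V_Q\). Since both closures are taken in the same ambient product \(L^2(Q;\mathbb R^d)\times L^2(Q;\mathbb R^{d\times d})\), taking closures gives \(V_Q\subseteq\widetilde V_Q\) as closed subspaces. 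Let \(\kappa:V_Q\to\widetilde V_Q\) be this inclusion. It is a linear isometry, so it is continuous.

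\emph{Step 2: factorization.} Let \(\jmath:H_Q\to L^2(Q;\mathbb R^d)\) be the isometric inclusion of the closed subspace \(H_Q\). Both \(J_Q\) and \(\widetilde J_Q\) are first-coordinate projection from the same ambient product. Therefore
\[
  \jmath\circ J_Q=\widetilde J_Q\circ\kappa .
\]
Here we use that \(J_Q\) takes values in \(H_Q\). This holds because the first coordinate of the dense class lies in the generating set of \(H_Q\), and by continuity the first coordinate of the closure lies in the closed space \(H_Q\). This fact is already recorded in \cref{prop:box-energy-closures}.

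\emph{Step 3: compactness transfers.} Since \(\widetilde J_Q\) is compact and \(\kappa\) is bounded, the composite \(\jmath\circ J_Q\) is compact. To remove \(\jmath\), take a bounded sequence \((v_n)\) in \(V_Q\). Some subsequence of \(\jmath J_Qv_{n_k}\) converges in \(L^2(Q;\mathbb R^d)\). Its limit lies in \(H_Q\) because \(H_Q\) is closed. Since \(\jmath\) is an isometry, \(J_Qv_{n_k}\) is Cauchy and hence convergent in \(H_Q\).

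\emph{Expected obstacle.} The only real difficulty is Step 1, in the formal setting. One has to check that a \(C_c^1(\mathbb R^d)\) field, viewed on \(Q\), meets the exact regularity predicates used for \(\widetilde V_Q\): continuity on \(Q\), and a specified derivative that agrees with the Fréchet derivative on the interior. One also has to check that the two closures really live in literally the same ambient Hilbert space, so that the inclusion of spans passes to closures. If the predicate for \(\widetilde V_Q\) requires the \(L^2\) classes of \(u\) and \(Du\) on \(Q\) to be built in the same way as for \(V_Q\), this is immediate. Otherwise I would first prove a lemma identifying the two \(L^2\) representatives, using almost-everywhere equality on \(Q\).
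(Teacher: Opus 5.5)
Your proposal is correct and follows essentially the same route as the paper. The inclusion of generating classes gives a continuous (indeed isometric) inclusion \(V_Q\hookrightarrow\widetilde V_Q\), precomposition with it preserves compactness, and restricting the codomain to the closed subspace \(H_Q\) recovers \(J_Q\); your Step~3 simply spells out the codomain-restriction argument that the paper invokes in one line.
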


\begin{proof}
Forgetting compact support, global \(C^1\) regularity, and incompressibility
maps every field in the defining solenoidal class into the zero-trace space.
Closure therefore gives a continuous inclusion
\(V_Q\hookrightarrow\widetilde V_Q\).  Precomposition of
\(\widetilde J_Q\) with this inclusion is compact.  Its range lies in the
closed subspace \(H_Q\) of the ambient velocity \(L^2\) space.  Restricting
the codomain of a compact map to a closed subspace preserves compactness, and
the resulting map is \(J_Q\).
\end{proof}

\begin{proposition}[Reduction to scalar Rellich compactness]
\label{prop:box-scalar-rellich-reduction}
Let
\[
  W_Q
  :=
  \overline{\operatorname{span}
    \{(f,Df):f\in C(Q),\ f\text{ is Fr\'echet differentiable on }Q^\circ,
      \ f|_{\partial Q}=0,\ f,Df\in L^2(Q)\}}
\]
in \(L^2(Q)\times L^2(Q;\mathbb R^d)\), and let
\[
  S_Q:W_Q\longrightarrow L^2(Q)
\]
be first-coordinate projection.  If \(S_Q\) is compact, then both
\(\widetilde J_Q:\widetilde V_Q\to L^2(Q;\mathbb R^d)\) and
\(J_Q:V_Q\to H_Q\) are compact.
\end{proposition}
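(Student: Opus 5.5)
The plan is to show that \(\widetilde J_Q\) is a finite direct sum of copies of \(S_Q\). The compactness of \(J_Q\) then follows from \cref{prop:box-rellich-reduction}.

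\emph{Step 1: the coordinate maps.} For each \(i\in\{1,\dots,d\}\) define
\[
  \pi_i:\widetilde V_Q\to W_Q,
  \qquad
  (u,U)\longmapsto (u_i,\,e_i^{\top}U),
\]
which takes the \(i\)-th component of the velocity and the \(i\)-th row of the matrix gradient. This map is a restriction of a continuous linear map of the ambient product \(L^2\) spaces, and its operator norm is at most \(1\). On a generator \((u,Du)\) of \(\widetilde V_Q\), the component \(u_i\) is continuous on \(Q\), Fréchet differentiable on \(Q^\circ\), and vanishes on \(\partial Q\). Its derivative is \(e_i^{\top}Du\), and both \(u_i\) and this row lie in \(L^2\). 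So \(\pi_i\) sends generators to generators of \(W_Q\), and by linearity and continuity it sends the closed span into the closed span \(W_Q\).

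\emph{Step 2: the decomposition.} By construction
\[
  \widetilde J_Q v=\sum_{i=1}^d \iota_i\,S_Q\,\pi_i v,
\]
where \(\iota_i:L^2(Q)\to L^2(Q;\mathbb R^d)\), \(g\mapsto g\,e_i\), is the isometric coordinate inclusion. Both sides are continuous and agree on the ambient level, so the identity holds on all of \(\widetilde V_Q\). Each summand is the composition of bounded operators with the compact operator \(S_Q\), hence compact, and a finite sum of compact operators is compact. Therefore \(\widetilde J_Q\) is compact, and \cref{prop:box-rellich-reduction} gives compactness of \(J_Q\).

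\emph{Expected obstacle.} The main difficulty is formal rather than analytic. One must verify that \(\pi_i\) lands in \(W_Q\): each component of a generator must satisfy the scalar generator conditions, including having the correct derivative representative and the vanishing face values. One must also identify the \(L^2\) class of the row of \(Du\) with the derivative of \(u_i\), under the matrix convention that \((Du\,e_j)_i=\partial_j u_i\). Once closed-span membership is transported through the continuous map (the image of a closure lies in the closure of the image), the rest is immediate.
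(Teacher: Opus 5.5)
Your proof is correct and matches the paper's argument. Your $\pi_i$ is the paper's coordinate map $C_i:\widetilde V_Q\to W_Q$. The paper likewise writes $\widetilde J_Q=\sum_{i}\iota_i S_Q C_i$ as a finite sum of compact operators and then invokes \cref{prop:box-rellich-reduction} for $J_Q$.
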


\begin{proof}
For each \(i\), extracting the \(i\)-th velocity component and the \(i\)-th
row of the gradient gives a continuous map
\[
  C_i:\widetilde V_Q\longrightarrow W_Q.
\]
The scalar coordinate \(\pi_i\widetilde J_Q=S_QC_i\) is therefore compact.
If \(\iota_i:L^2(Q)\to L^2(Q;\mathbb R^d)\) inserts a scalar field in the
\(i\)-th coordinate, then
\[
  \widetilde J_Q
  =
  \sum_{i=1}^{d}\iota_i\pi_i\widetilde J_Q.
\]
Every summand is compact, hence so is the finite sum.
\Cref{prop:box-rellich-reduction} then gives compactness of \(J_Q\).
\end{proof}

\begin{proposition}[Scalar Rellich compactness on rectangles]
\label{prop:box-scalar-rellich}
Let \(d\geq1\), let
\[
  Q=\prod_{j=1}^{d}[a_j,b_j],
  \qquad a_j<b_j,
\]
and let \(W_Q\) and \(S_Q\) be as in
\cref{prop:box-scalar-rellich-reduction}.  Then
\[
  S_Q:W_Q\longrightarrow L^2(Q)
\]
is compact.  Consequently the full vector graph projection
\(\widetilde J_Q\) and the canonical divergence-free map
\(J_Q:V_Q\to H_Q\) are compact.
\end{proposition}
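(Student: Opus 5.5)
The plan is to prove compactness of \(S_Q\) by approximating it in operator norm with finite-rank operators given by low-frequency Fourier projections. The consequences for \(\widetilde J_Q\) and \(J_Q\) then follow at once from \cref{prop:box-scalar-rellich-reduction}.

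\textbf{Step 1: extension and periodization.} Choose a larger period cell, say the torus \(\mathbb T_L=\prod_j\mathbb R/L_j\mathbb Z\) with \(L_j=2(b_j-a_j)\). Translate so that \(Q\) sits inside one fundamental cell, and extend each generating function \(f\) by zero outside \(Q\).

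Because \(f\) is continuous on \(Q\) and vanishes on every face, the extension \(\bar f\) is continuous. It is also periodic and has weak gradient equal to \(Df\) on \(Q\) and \(0\) elsewhere. I will verify this by the same coordinate integration by parts used in \cref{prop:box-energy-closures}, namely \cref{prop:box-transport-ibp} applied to \(f\) times a constant field \(e_j\) times a smooth periodic test. The boundary terms vanish because \(f|_{\partial Q}=0\).

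Consequently, for the Fourier coefficients \(\hat f(k)\) of \(\bar f\) on \(\mathbb T_L\), the Fourier coefficients of \(Df\) in direction \(j\) equal \(2\pi i k_j/L_j\,\hat f(k)\). Parseval then gives
\[
  \sum_k \Bigl(\sum_j (2\pi k_j/L_j)^2\Bigr)\lvert\hat f(k)\rvert^2=\norm{Df}_{L^2(Q)}^2 .
\]

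\textbf{Step 2: uniform tail estimate.} For \(M>0\), let \(P_M\) be the operator that keeps only the modes with \(\lvert k\rvert_\infty\le M\) and then restricts to \(Q\). Define \(T_M:=\mathbf 1_Q P_M(\overline{\,\cdot\,})\) on generators. Parseval yields
\[
  \norm{f-T_Mf}_{L^2(Q)}^2\le\sum_{\lvert k\rvert_\infty>M}\lvert\hat f(k)\rvert^2
  \le \frac{\max_j L_j^2}{4\pi^2M^2}\norm{Df}_{L^2(Q)}^2 .
\]
The right-hand side is at most a constant times \(M^{-2}\norm{(f,Df)}_{W_Q}^2\).

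Each coefficient map \(f\mapsto\hat f(k)\) is an \(L^2\) inner product with a bounded function. Hence \(T_M\) is a continuous finite-rank linear map on the span of the generators, bounded by \(\norm{f}_{L^2}\le\norm{(f,Df)}_{W_Q}\). It therefore extends to \(W_Q\) as a continuous map of the first coordinate.

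Both \(S_Q-T_M\) and the bound \(\norm{S_Qw-T_Mw}\le C M^{-1}\norm{w}\) are continuous in \(w\). The inequality is therefore a closed condition, and it passes from the dense span to the closure \(W_Q\), as in the proof of \cref{prop:box-poincare}. This gives \(\norm{S_Q-T_M}\to0\).

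\textbf{Step 3: conclusion.} Finite-rank operators are compact, and operator-norm limits of compact operators are compact, so \(S_Q\) is compact. \Cref{prop:box-scalar-rellich-reduction} then yields compactness of \(\widetilde J_Q\) and of \(J_Q:V_Q\to H_Q\).

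\textbf{Main obstacle.} I expect the hardest part to be Step 1, the identification of the Fourier coefficients of \(\bar f\) with those of the gradient. The generators are only continuous on \(Q\), differentiable on the interior, and have \(L^2\) derivative rather than integrable \(C^1\) data. I will handle this by applying \cref{prop:box-transport-ibp} with the smooth complex exponential (real and imaginary parts separately) playing the role of the test. The integrability hypotheses hold because \(Df\in L^2(Q)\subset L^1(Q)\) on the bounded box, and \(f\) is bounded.

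A second, bookkeeping-level point is linearity. Since generators are only spanned, I will state the identity for finite linear combinations by linearity of both sides before passing to the closure.
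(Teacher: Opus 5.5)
Your proposal is correct and follows essentially the paper's argument: the Fourier derivative identity from coordinate integration by parts on the zero-face class, passage to the closure by continuity, the \(O(1/N)\) Parseval tail bound outside a frequency cube, and operator-norm approximation of \(S_Q\) by finite-rank low-frequency projections, followed by \cref{prop:box-scalar-rellich-reduction}. The only difference is that you periodize directly on a torus adapted to \(Q\) (zero extension to a doubled cell) instead of proving the unit-cube case and transferring it by the normalized affine pullback \(f\mapsto\sqrt D\,f\circ A\). This removes the rescaling step, at the cost of carrying the periods \(L_j\) into the constant.
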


\begin{proof}
First take \(Q=[0,1]^d\).  Identifying the half-open cube with a fundamental
cell embeds real \(L^2(Q)\) isometrically in complex
\(L^2(\mathbb T^d)\).  For \(k\in\mathbb Z^d\), write \(\widehat f(k)\) for
the corresponding Fourier coefficient.  Coordinate integration by parts for
each field in the defining zero-face class gives
\[
  \widehat{(Df)_j}(k)
  =2\pi i k_j\widehat f(k).
\]
Both sides are continuous linear functionals of the value--derivative pair,
so the identity holds on the closed graph \(W_Q\).

For \(N\geq1\), let
\[
  \Lambda_N=\{k\in\mathbb Z^d:|k_j|\leq N
  \text{ for every }j\}.
\]
If \(k\notin\Lambda_N\), some coordinate satisfies \(|k_j|\geq N\).  The
derivative identity and \(4\pi^2\geq1\) imply
\[
  |\widehat f(k)|^2
  \leq \frac1{N^2}\sum_{j=1}^{d}
    |\widehat{(Df)_j}(k)|^2.
\]
Summing over the complement of \(\Lambda_N\) and applying Parseval gives
\[
  \sum_{k\notin\Lambda_N}|\widehat f(k)|^2
  \leq \frac1{N^2}\sum_{j=1}^{d}
    \norm{(Df)_j}_{L^2(Q)}^2
  \leq \frac1{N^2}\norm{(f,Df)}_{W_Q}^2.
\]
The Fourier projection onto \(\Lambda_N\) has finite-dimensional range, and
the displayed estimate shows that these finite-rank operators converge to
the corresponding torus operator in operator norm, with error at most
\(1/N\).  Hence \(S_Q\) is compact on the unit cube.

For a general rectangular domain, put \(\ell_j=b_j-a_j\),
\(A(y)_j=a_j+\ell_jy_j\), and
\(D=\prod_j\ell_j\).  The normalized pullback
\[
  f\longmapsto \sqrt D\,f\circ A
\]
is an \(L^2\)-isometry.  Its derivative coordinates are
\(\sqrt D\,\ell_j(Df)_j\circ A\), so it defines a continuous map from
the completed graph on \(Q\) to the completed graph on the unit cube.  It
also preserves the vanishing boundary values.  Composition with the compact
unit-cube projection is therefore the normalized pullback of \(S_Q\).
Since the target pullback is an isometry, compactness of the conjugated
operator implies compactness of \(S_Q\) on \(Q\).  The vector and
divergence-free conclusions follow from
\cref{prop:box-scalar-rellich-reduction}.
\end{proof}

\begin{proposition}[Compact Gelfand triple and Galerkin energy bound]
\label{prop:box-gelfand-leray-bound}
Let \(V_Q\), \(H_Q\), and \(J_Q\) be the spaces associated with \(Q\)
from \cref{prop:box-energy-closures}.  Write
\(\mathcal G_Q=L^2(Q;\mathbb R^{d\times d})\), so the closed gradient
coordinate is \(G_Q:V_Q\to\mathcal G_Q\).  Then
\[
  V_Q\xhookrightarrow{\,J_Q\,}H_Q
  \xhookrightarrow{\,\iota_Q\,}V_Q'
\]
is a compact Gelfand triple, where
\[
  (\iota_Qh)(v)=\ip{h}{J_Qv}_{H_Q}.
\]
Both arrows are injective, \(J_Q(V_Q)\) is dense in \(H_Q\), and
\[
  \norm{v}_{V_Q}^2
  =\norm{J_Qv}_{H_Q}^2+\norm{G_Qv}_{\mathcal G_Q}^2.
\]
Consequently, for every finite interval \(I\) and every continuous path
\(v:I\to V_Q\),
\begin{equation}\label{eq:box-time-graph-norm}
  \norm{v}_{L^2(I;V_Q)}^2
  =\norm{J_Qv}_{L^2(I;H_Q)}^2
   +\norm{G_Qv}_{L^2(I;\mathcal G_Q)}^2.
\end{equation}

Let \(W\) be a finite-dimensional real Hilbert space, let
\(E:W\to V_Q\) be a linear lift such that \(J_QE\) is an isometry, and set
\[
  a_W(x,y)=\ip{G_QEx}{G_QEy}_{\mathcal G_Q}.
\]
If a continuous function \(u:I\to W\) satisfies
\[
  \sup_{t\in I}\norm{u(t)}_W\leq R,
  \qquad
  \int_I a_W(u(t),u(t))\,dt\leq \Delta^2,
\]
then
\begin{equation}\label{eq:box-galerkin-lift-bound}
  \norm{Eu}_{L^2(I;V_Q)}
  \leq \sqrt{|I|R^2+\Delta^2}.
\end{equation}
\end{proposition}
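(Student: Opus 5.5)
The plan is to assemble the Gelfand-triple statement from results already proved, and then obtain the two norm identities by integrating the pointwise graph-norm identity.

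\emph{The triple.} Compactness, injectivity and density of range for \(J_Q\) come from \cref{prop:box-energy-closures} together with \cref{prop:box-scalar-rellich}. Boundedness of \(\iota_Q\), with \(\norm{\iota_Qh}_{V_Q'}\leq\norm{h}_{H_Q}\), is the pivot estimate of \cref{prop:box-energy-closures}. For injectivity of \(\iota_Q\), suppose \(\iota_Qh=0\). Then \(\ip{h}{J_Qv}_{H_Q}=0\) for every \(v\in V_Q\), so \(h\) is orthogonal to the dense subspace \(J_Q(V_Q)\), and hence \(h=0\). The graph-norm identity is the one already recorded in \cref{prop:box-energy-closures}.

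\emph{Time-integrated identity.} For a continuous path \(v:I\to V_Q\), the compositions \(J_Qv\) and \(G_Qv\) are continuous, because \(J_Q\) and \(G_Q\) are bounded linear maps. On the finite interval \(I\) each of these paths is bounded and strongly measurable, so each defines an \(L^2\) Bochner class, and these classes are the images of \(v\) under the induced Bochner maps. The squared \(L^2(I;X)\) norm of a continuous path is \(\int_I\norm{v(t)}_X^2\,dt\). Integrating the pointwise identity \(\norm{v(t)}_{V_Q}^2=\norm{J_Qv(t)}_{H_Q}^2+\norm{G_Qv(t)}_{\mathcal G_Q}^2\) and using linearity of the integral gives \cref{eq:box-time-graph-norm}, since all three integrands are continuous and therefore integrable on \(I\).

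\emph{Galerkin lift bound.} Let \(u:I\to W\) be continuous. Then \(Eu\) is continuous into \(V_Q\), because \(E\) is linear on a finite-dimensional space and hence bounded. Apply \cref{eq:box-time-graph-norm} to \(v=Eu\).
\begin{itemize}
\item Since \(J_QE\) is an isometry, \(\norm{J_QEu(t)}_{H_Q}=\norm{u(t)}_W\leq R\), so the first term is at most \(|I|R^2\).
\item The second term equals \(\int_I a_W(u(t),u(t))\,dt\), which is at most \(\Delta^2\).
\end{itemize}
Taking square roots gives \cref{eq:box-galerkin-lift-bound}.

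The only step with real content is the identification of the \(L^2(I;\cdot)\) norms of continuous paths with integrals of the pointwise squared norms. This requires checking that the Bochner classes induced by \(J_Q\) and \(G_Q\) agree with the classes of the composed continuous paths. That is a measure-theoretic bookkeeping step rather than an analytic one.
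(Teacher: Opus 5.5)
Your proposal is correct and follows the paper's proof essentially step by step. It takes compactness from \cref{prop:box-scalar-rellich}, gets injectivity of \(\iota_Q\) from orthogonality to the dense range of \(J_Q\), integrates the pointwise graph identity, and bounds the two terms using that \(J_QE\) is an isometry and the definition of \(a_W\). Your remarks that \(Eu\) is continuous because \(W\) is finite-dimensional, and that the Bochner classes of \(J_Qv\) and \(G_Qv\) are those of the composed continuous paths, only make explicit bookkeeping the paper leaves implicit.
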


\begin{proof}
Compactness of \(J_Q\) is \cref{prop:box-scalar-rellich}; injectivity, dense
range, and the graph identity follow from the completed graph construction.
If \(\iota_Qh=0\), then \(h\) is orthogonal to the dense range of \(J_Q\), hence
\(h=0\).  Integrating the pointwise graph identity gives
\cref{eq:box-time-graph-norm}.  Since \(J_QE\) is an isometry,
\(
  \norm{J_QEu}_{L^2(I;H_Q)}^2\leq |I|R^2
\), while the definition of \(a_W\) gives
\(
  \norm{G_QEu}_{L^2(I;\mathcal G_Q)}^2
  =\int_I a_W(u,u)\,dt\leq\Delta^2
\).
Equation \cref{eq:box-galerkin-lift-bound} follows from
\cref{eq:box-time-graph-norm}.
\end{proof}

\begin{proposition}[Singular-value bases for a compact embedding]
\label{prop:compact-embedding-singular-bases}
Let \(V\) and \(H\) be separable complete real Hilbert spaces, and let
\(J:V\to H\) be compact, injective, contractive, and have dense range.  There
exist an at most countable index set \(K\), Hilbert bases
\((e_i)_{i\in K}\) of \(V\) and \((f_i)_{i\in K}\) of \(H\), numbers
\(\sigma_i\in(0,1]\), and finite sets
\[
  K_0\subseteq K_1\subseteq\cdots,
  \qquad
  \bigcup_{m\geq0}K_m=K,
\]
such that
\begin{equation}\label{eq:compact-embedding-singular-bases}
  Je_i=\sigma_i f_i,
  \qquad
  \ip{Jv}{f_i}_H=\sigma_i\ip{v}{e_i}_V
  \quad(v\in V,\ i\in K).
\end{equation}
If \(P_m^V\) and \(P_m^H\) are the orthogonal projections onto the spans of
\(\{e_i:i\in K_m\}\) and \(\{f_i:i\in K_m\}\), then
\[
  P_m^Vv\longrightarrow v,\qquad P_m^Hh\longrightarrow h,
\]
and
\begin{equation}\label{eq:compact-embedding-spectral-tail}
  \norm{J-P_m^HJ}_{V\to H}\longrightarrow0.
\end{equation}
\end{proposition}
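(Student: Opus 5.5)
We apply the compact self-adjoint spectral theorem to the Gram operator \(T:=J^*J\colon V\to V\). Compactness of \(J\) makes \(T\) compact, \(\ip{Tv}{v}_V=\norm{Jv}_H^2\geq0\) makes it positive and self-adjoint, and injectivity of \(J\) makes \(T\) injective: \(Tv=0\) gives \(\norm{Jv}_H^2=0\), hence \(v=0\). Since \(\ker T=0\), the eigenvectors for nonzero eigenvalues span a dense subspace of \(V\). Choosing an orthonormal basis inside each (finite-dimensional) eigenspace therefore yields a Hilbert basis \((e_i)_{i\in K}\) of \(V\) with \(Te_i=\lambda_ie_i\) and \(\lambda_i>0\). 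The index set \(K\) is at most countable by separability of \(V\), or directly because each eigenspace is finite-dimensional and the nonzero eigenvalues are countable. Contractivity gives \(\lambda_i=\norm{Je_i}_H^2\leq1\).

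Put \(\sigma_i=\sqrt{\lambda_i}\in(0,1]\) and \(f_i=Je_i/\sigma_i\). Orthonormality of the \(f_i\) follows from
\[
  \ip{Je_i}{Je_j}_H=\ip{Te_i}{e_j}_V=\lambda_i\delta_{ij}.
\]
The identity in \cref{eq:compact-embedding-singular-bases} follows from
\[
  \ip{Jv}{f_i}_H=\sigma_i^{-1}\ip{v}{Te_i}_V=\sigma_i\ip{v}{e_i}_V.
\]
To show that \((f_i)\) is complete in \(H\), suppose \(h\perp f_i\) for every \(i\). Then \(\ip{J^*h}{e_i}_V=\ip{h}{Je_i}_H=0\) for all \(i\), so \(J^*h=0\). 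Hence \(h\) is orthogonal to the dense range of \(J\), and \(h=0\).

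For the nested sets, enumerate \(K\) injectively as \(k_0,k_1,\dots\) (a finite or infinite list) and set \(K_m=\{k_j:j<m\}\), capped at \(K\) when \(K\) is finite. These sets are finite and increasing, and their union is \(K\). The convergences \(P_m^Vv\to v\) and \(P_m^Hh\to h\) are then the usual consequence of Parseval's identity along an exhausting sequence of finite subsets of a Hilbert basis: the tail sums of the coefficients tend to zero.

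The main step is the spectral tail estimate \cref{eq:compact-embedding-spectral-tail}. We have
\[
  (J-P_m^HJ)v=\sum_{i\notin K_m}\sigma_i\ip{v}{e_i}_V f_i,
  \qquad\text{so}\qquad
  \norm{J-P_m^HJ}\leq\sup_{i\notin K_m}\sigma_i.
\]
If we enumerate \(K\) by decreasing eigenvalue, this supremum tends to zero because the nonzero eigenvalues of a compact operator accumulate only at zero, and each value occurs only finitely often. A route that does not depend on the ordering is more robust, however, and we would try it first. We have \(J-P_m^HJ=(I-P_m^H)J\), and \(I-P_m^H\) converges strongly to \(0\) with \(\norm{I-P_m^H}\leq1\). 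Strongly convergent, uniformly bounded operators converge uniformly on compact sets, and \(J\) maps the unit ball of \(V\) into a relatively compact set. Hence \(\norm{(I-P_m^H)J}\to0\). This avoids reordering the eigenvalues, so any enumeration of \(K\) can be used. Once that uniform-convergence lemma is in hand, what remains is routine bookkeeping: the case of finite \(K\), and choosing a basis within each eigenspace.
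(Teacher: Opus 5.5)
Your proposal is correct and follows the paper's proof essentially step for step: the spectral theorem for the injective compact positive operator \(J^*J\), then \(f_i=\sigma_i^{-1}Je_i\) with orthonormality from the adjoint identity and completeness from the dense range, an enumeration of \(K\) for the nested finite sets, and the tail estimate from uniform convergence of the contractive, strongly convergent projectors on the relatively compact image of the unit ball. (Your first route via \(\sup_{i\notin K_m}\sigma_i\) also works for any enumeration, not just a decreasing one, since only finitely many indices satisfy \(\sigma_i\geq\varepsilon\), and these eventually all lie in \(K_m\).)
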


\begin{proof}
The compact positive operator \(T=J^*J\) has trivial kernel.  Its spectral
theorem gives an orthonormal eigenbasis \(e_i\) with positive eigenvalues
\(\lambda_i\).  Set \(\sigma_i=\sqrt{\lambda_i}\) and
\(f_i=\sigma_i^{-1}Je_i\).  The adjoint identity proves orthonormality;
density of the range of \(J\) proves completeness in \(H\).
Contractivity gives \(\sigma_i\leq1\).
By separability, there are nested finite subsets whose union is the full index
set.  The corresponding
orthogonal projectors converge strongly to the identity and are
contractive; convergence is uniform on the compact closure of the image of
the unit ball under \(J\).  This gives
\cref{eq:compact-embedding-spectral-tail}.
\end{proof}

\begin{proposition}[Spectral Galerkin spaces on a rectangular domain]
\label{prop:box-spectral-galerkin-spaces}
The spaces \(V_Q\) and \(H_Q\) are separable as closed subspaces of separable
finite products of \(L^2\) spaces.  Therefore the preceding proposition
applies to \(J_Q\).
Choose the spectral data obtained by applying
\cref{prop:compact-embedding-singular-bases} to
\(J_Q:V_Q\to H_Q\), and denote them by
\((e_i)_{i\in K}\), \((f_i)_{i\in K}\), \((\sigma_i)_{i\in K}\), and
\((K_m)_{m\geq0}\).  Set
\[
  g_i=\sigma_i^{-1}e_i,
  \qquad
  H_m=\operatorname{span}\{f_i:i\in K_m\}.
\]
There is a continuous lift \(E_m:H_m\to V_Q\) such that
\[
  J_QE_mx=x,
  \qquad
  \norm{J_QE_mx}_{H_Q}=\norm{x}_{H_m}.
\]
The spaces \(H_m\) are nested and have dense union in \(H_Q\).  Let
\(Q_m:V_Q\to H_m\) be the orthogonal projection of \(J_Qv\) onto \(H_m\).
Then
\begin{equation}\label{eq:energy-test-projection}
  E_mQ_mv
  =\sum_{i\in K_m}\ip{v}{e_i}_{V_Q}e_i
  =:P_m^Vv,
  \qquad
  P_m^Vv\longrightarrow v\quad\text{in }V_Q.
\end{equation}
Then, for every \(x\in H_m\) and every \(i\in K\),
\begin{equation}\label{eq:box-spectral-coordinate}
  \ip{x}{Q_m g_i}_{H_m}=\ip{J_QE_mx}{f_i}_{H_Q}.
\end{equation}
If \(c:V_Q^3\to\mathbb R\) is a bounded trilinear form satisfying
\(c(u,v,w)=-c(u,w,v)\), then the pullbacks of \(c\), the gradient diffusion
form, and a fixed \(V_Q'\)-valued forcing define a finite-dimensional
variational Galerkin problem on every \(H_m\).  Its convection form is skew
and its diffusion form is nonnegative.
\end{proposition}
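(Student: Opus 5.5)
The plan is to obtain separability first, then work entirely with the singular-value data of \cref{prop:compact-embedding-singular-bases}, checking every identity on basis vectors. For separability: \(L^2(Q;\mathbb R^d)\) and \(L^2(Q;\mathbb R^{d\times d})\) are separable because Lebesgue measure on \(Q\) is countably generated. Their finite product is separable, and subspaces of separable metric spaces are separable. Hence \(V_Q\) and \(H_Q\) are separable. The remaining hypotheses on \(J_Q\) (compactness, injectivity, contractivity, dense range) come from \cref{prop:box-scalar-rellich} and \cref{prop:box-energy-closures}. So \cref{prop:compact-embedding-singular-bases} applies and yields \(e_i,f_i,\sigma_i,K_m\).

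Next I define the lift on the basis by \(E_mf_i:=g_i=\sigma_i^{-1}e_i\) for \(i\in K_m\), extended linearly. It is continuous because \(H_m\) is finite-dimensional. Since \(J_Qg_i=\sigma_i^{-1}\sigma_if_i=f_i\), we get \(J_QE_m=\mathrm{id}_{H_m}\), and the isometry statement follows trivially. The spaces are nested because \(K_m\subseteq K_{m+1}\). The union is dense because \(P_m^Hh\to h\).

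For \cref{eq:energy-test-projection}, I compute \(Q_mv=\sum_{i\in K_m}\ip{J_Qv}{f_i}f_i=\sum_{i\in K_m}\sigma_i\ip{v}{e_i}f_i\) using \cref{eq:compact-embedding-singular-bases}. Applying \(E_m\) gives \(\sum_{i\in K_m}\sigma_i\ip{v}{e_i}\sigma_i^{-1}e_i=P_m^Vv\), and convergence is part of the previous proposition. For \cref{eq:box-spectral-coordinate}, note that \(Q_mg_i\) is the projection of \(J_Qg_i=f_i\) onto \(H_m\). This equals \(f_i\) if \(i\in K_m\) and \(0\) otherwise, by orthonormality. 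On the other side, \(J_QE_mx=x\) and \(x\in H_m\), so \(\ip{x}{f_i}\) also equals \(\ip{x}{f_i}\) or \(0\) in the same cases. The two sides therefore agree.

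Finally, the Galerkin problem is defined on \(H_m\) by
\[
  c_m(x,y,z)=c(E_mx,E_my,E_mz),\qquad A_m(x,y)=\ip{G_QE_mx}{G_QE_my},\qquad \mathcal F_m(t)(x)=\mathcal F(t)(E_mx).
\]
Continuity and trilinear bounds come from finite-dimensionality and the boundedness of \(E_m\). Skewness is inherited directly from \(c\), and \(A_m(x,x)=\norm{G_QE_mx}^2\geq0\). Together these feed the variational Galerkin system exactly as in \cref{prop:box-variational-cancellation}.

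The main obstacle is bookkeeping rather than analysis. The key point is to check that \(Q_m\) (projection of \(J_Qv\)) and \(E_m\) interact correctly with \(V_Q\)-coordinates, which is precisely the adjoint relation \(\ip{J_Qv}{f_i}=\sigma_i\ip{v}{e_i}\). A secondary point is making sure the forcing pullback matches whatever continuity in \(t\) the abstract system requires.
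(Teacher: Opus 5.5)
Your proposal is correct and follows the paper's proof essentially step for step. It uses the same lift $E_mx=\sum_{i\in K_m}\ip{x}{f_i}_{H_Q}g_i$, the adjoint relation $\ip{J_Qv}{f_i}_{H_Q}=\sigma_i\ip{v}{e_i}_{V_Q}$ to identify $E_mQ_m$ with $P_m^V$, orthogonality of the $H_Q$-projector for the coordinate identity, and pullback through $E_m$ for the skew convection and nonnegative diffusion forms. The one cosmetic difference is at the end: the paper invokes Riesz representation on $H_m$ to form the ODE right-hand side instead of pointing to \cref{prop:box-variational-cancellation}, whose classical-field hypotheses $E_m$ does not literally satisfy (and which you do not need, since skewness is inherited directly from $c$).
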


\begin{proof}
For \(x\in H_m\), define
\[
  E_mx=\sum_{i\in K_m}\ip{x}{f_i}_{H_Q}g_i.
\]
Since \(J_Qg_i=f_i\), orthonormality gives \(J_QE_mx=x\) and the norm
identity.  The finite-dimensional spans are nested and have dense union.
Equation \cref{eq:compact-embedding-singular-bases} gives
\[
  E_mQ_mv
  =\sum_{i\in K_m}\ip{v}{e_i}_{V_Q}e_i.
\]
Completeness of the energy basis proves
\cref{eq:energy-test-projection}; in particular, \(E_mQ_m\) is a contraction.
Orthogonality of the \(H_Q\)-projector gives
\cref{eq:box-spectral-coordinate}.  Composing each argument of the diffusion
and convection forms with \(E_m\) preserves diffusion nonnegativity and
convection skew-symmetry.  Riesz representation on \(H_m\) produces the
right-hand side of the finite-dimensional ODE.
\end{proof}

\section{Variational Galerkin Approximations}

For each \(m\), Riesz representation turns the diffusion, convection, and
forcing forms on \(H_m\) into a coefficient ODE.
\Cref{thm:variational-finite-dimensional} gives local existence, the exact energy
identity, continuation from the energy estimate, and a uniform
\(V_Q'\)-derivative estimate.

\subsection{Finite-Dimensional Variational Solutions}

Let \(W\) be a finite-dimensional Galerkin space equipped with the
\(H\)-inner product.  Let
\[
  a:W\times W\to\R,\qquad
  b:W\times W\times W\to\R,\qquad
  \mathcal F:[t_{\min},t_{\max}]\to W'
\]
where \(a\) and \(b\) are bounded multilinear forms and \(\mathcal F\) is
a forcing path.  Assume
\[
  a(w,w)\geq 0,\qquad
  b(u,v,w)=-b(u,w,v),\qquad
  \|b(u,v,\cdot)\|_{W'}\leq C_b\|u\|_H\|v\|_H .
\]
The constant \(C_b\) may depend on \(m\); it is used only for
finite-dimensional local existence.  Uniform time control is obtained later
from the \(V_Q'\)-estimate on the lifted solutions.
Riesz representation defines \(R(t,u)\in W\) by
\begin{equation}\label{eq:variational-rhs}
  \ip{R(t,u)}{v}_H
  =
  -a(u,v)-b(u,u,v)+\mathcal F(t)(v)
  \qquad(v\in W).
\end{equation}

\begin{theorem}[Finite-dimensional variational Galerkin theorem]
\label{thm:variational-finite-dimensional}
Let \(u_0\in W\), let \(\mathcal F\) be continuous in the dual norm, and assume
\(\|\mathcal F(t)\|_{W'}\leq M\) on
\([t_{\min},t_{\max}]\).  Fix
\(t_0\in[t_{\min},t_{\max}]\) and \(\rho\geq0\), and set
\[
  R_\rho=\|u_0\|_H+\rho,\qquad
  L_\rho=\|a\|R_\rho+C_bR_\rho^2+M.
\]
If
\[
  L_\rho
  \max\{t_{\max}-t_0,t_0-t_{\min}\}\leq\rho,
\]
then \(\dot u=R(t,u)\), \(u(t_0)=u_0\), has a solution on
\([t_{\min},t_{\max}]\).  For every \(T\in[t_0,t_{\max}]\), such a solution
satisfies
\begin{equation}\label{eq:variational-energy}
  \|u(T)\|_H^2+2\int_{t_0}^T a(u,u)\,dt
  =
  \|u_0\|_H^2+2\int_{t_0}^T\mathcal F(t)(u(t))\,dt .
\end{equation}
More generally, if \(\|x\|_H\leq R_0\), the same conclusion with initial
datum \(x\) holds whenever
\[
  \bigl(\|a\|(R_0+\rho)+C_b(R_0+\rho)^2+M\bigr)
  \max\{t_{\max}-t_0,t_0-t_{\min}\}\leq\rho .
\]
Thus the initial-value problem has a uniform interval of local existence for
all initial data in the \(H\)-ball of radius \(R_0\).

If \(g\) is integrable on \([t_0,T]\) and, for \(t\in[t_0,T]\),
\[
  2\mathcal F(t)(u(t))\leq g(t)+a(u(t),u(t)),
\]
then
\begin{equation}\label{eq:variational-apriori}
  \|u(T)\|_H^2+\int_{t_0}^T a(u,u)\,dt
  \leq
  \|u_0\|_H^2+\int_{t_0}^T g(t)\,dt .
\end{equation}

Suppose now that \(g\geq0\) is integrable on
\([t_{\min},t_{\max}]\), that
\[
  2\mathcal F(t)(x)\leq g(t)+a(x,x)
  \qquad
  (t\in[t_{\min},t_{\max}],\ x\in W),
\]
and that \(R_0\geq0\) satisfies
\[
  \|u_0\|_H^2+\int_{t_{\min}}^{t_{\max}}g(t)\,dt\leq R_0^2.
\]
Then the initial-value problem with \(u(t_{\min})=u_0\) has a solution on the
entire interval \([t_{\min},t_{\max}]\), and
\(\|u(t)\|_H\leq R_0\) throughout that interval.

Let \(Q:V\to W\) be a continuous Galerkin test map, let
\(c_A,c_B,h(t),d(t),f(t)\) be nonnegative, and define
\(\mathcal D(t)\in V'\) by
\[
  \mathcal D(t)(\phi)
  =
  -a(u,Q\phi)-b(u,u,Q\phi)+\mathcal F(t)(Q\phi).
\]
If the three terms satisfy
\begin{align*}
  |a(u,Q\phi)|&\leq c_A d(t)\|\phi\|_V,\\
  |b(u,u,Q\phi)|&\leq c_B h(t)d(t)\|\phi\|_V,\\
  |\mathcal F(t)(Q\phi)|&\leq f(t)\|\phi\|_V,
\end{align*}
then
\begin{equation}\label{eq:variational-dual-derivative}
  \|\mathcal D(t)\|_{V'}
  \leq c_A d(t)+c_Bh(t)d(t)+f(t).
\end{equation}
If \(h(t)\leq H_*\), then
\begin{equation}\label{eq:variational-dual-square}
  \|\mathcal D(t)\|_{V'}^2
  \leq 2(c_A+c_BH_*)^2d(t)^2+2f(t)^2.
\end{equation}
Moreover, the finite-dimensional solution satisfies
\[
  \frac{d}{dt}\ip{u(t)}{Q\phi}_H=\mathcal D(t)(\phi)
\]
on its interval of existence.
\end{theorem}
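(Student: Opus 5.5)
The proof splits into four parts: local existence, the energy identity, global continuation, and the dual-derivative estimate.

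\textbf{Local existence.} I will treat \(\dot u=R(t,u)\) as an ODE on the finite-dimensional space \(W\) and apply Picard--Lindel\"of on the closed ball \(\overline B(u_0,\rho)\) (or \(\overline B(x,\rho)\) with \(\|x\|_H\leq R_0\)). By Riesz representation and \eqref{eq:variational-rhs}, \(\|R(t,u)\|_H\) is the \(W'\)-norm of \(v\mapsto -a(u,v)-b(u,u,v)+\mathcal F(t)(v)\). On that ball this is at most \(\|a\|R_\rho+C_bR_\rho^2+M=L_\rho\). Since \(a,b\) are bounded multilinear forms, \(R\) is Lipschitz in \(u\) on the ball, with constant roughly \(\|a\|+2C_bR_\rho\), and it is continuous in \(t\) because \(\mathcal F\) is continuous in the dual norm. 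The hypothesis \(L_\rho\max\{t_{\max}-t_0,t_0-t_{\min}\}\leq\rho\) is exactly the condition that keeps the trajectory inside the ball. So the two-sided Picard--Lindel\"of theorem gives a solution on all of \([t_{\min},t_{\max}]\). The bound depends on the initial datum only through \(R_0\), which gives the uniform interval of existence.

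\textbf{Energy identity and a priori bound.} I will differentiate \(\|u(t)\|_H^2\) and get \(2\ip{R(t,u)}{u}_H=-2a(u,u)-2b(u,u,u)+2\mathcal F(t)(u)\). Skew-symmetry gives \(b(u,u,u)=-b(u,u,u)\), hence \(b(u,u,u)=0\). The right-hand side is continuous in \(t\), so integrating from \(t_0\) to \(T\) yields \eqref{eq:variational-energy}. Substituting \(2\mathcal F(t)(u)\leq g+a(u,u)\) into the integrand gives \eqref{eq:variational-apriori}.

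\textbf{Global continuation.} Under the global hypothesis on \(g\), the a priori bound together with \(a\geq0\) gives \(\|u(t)\|_H\leq R_0\) for any solution on any subinterval \([t_{\min},T]\). I will fix \(\rho=1\) and pick a step \(\delta>0\) with \(\bigl(\|a\|(R_0+1)+C_b(R_0+1)^2+M\bigr)\delta\leq1\). Then I iterate on \([t_k,t_k+\delta]\), restarting from \(u(t_k)\), which lies in the \(R_0\)-ball by the a priori bound applied to the concatenated solution. The uniform local existence result gives each step, and uniqueness (from the Lipschitz property) lets the pieces glue into a \(C^1\) solution. Since \(\delta\) does not depend on \(k\), finitely many steps cover \([t_{\min},t_{\max}]\). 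I expect the main obstacle to be the bookkeeping in this step: showing that the glued function is differentiable at the junction points, and that the energy bound applies to the whole concatenation rather than to each piece separately. I would handle it by induction on \(k\), applying \eqref{eq:variational-apriori} on \([t_{\min},t_k]\) with \(t_0=t_{\min}\) at each stage.

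\textbf{Dual-derivative estimate.} The bound \eqref{eq:variational-dual-derivative} follows from the three hypotheses by the triangle inequality together with the definition of the operator norm on \(V'\). Then \((x+y)^2\leq2x^2+2y^2\) with \(x=(c_A+c_Bh)d\leq(c_A+c_BH_*)d\) gives \eqref{eq:variational-dual-square}. Finally, \(\frac{d}{dt}\ip{u}{Q\phi}_H=\ip{\dot u}{Q\phi}_H=\ip{R(t,u)}{Q\phi}_H\), which equals \(\mathcal D(t)(\phi)\) by \eqref{eq:variational-rhs} with \(v=Q\phi\in W\).
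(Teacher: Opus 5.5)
Your proposal is correct and follows the paper's proof essentially step for step: Picard--Lindel\"of on the ball of radius \(R_\rho\) with Lipschitz constant \(\|a\|+2C_bR_\rho\) and bound \(L_\rho\), the energy identity from \(b(u,u,u)=0\), continuation with a uniform step obtained by taking \(\rho=1\) and radius \(R_0\), and the triangle-inequality dual estimates. The only difference is in packaging. You continue by finitely many forward steps glued at matching endpoints, where matching one-sided derivatives already suffice and uniqueness is not strictly needed. The paper instead concatenates local solutions into a maximal solution and derives a contradiction if its right endpoint lies below \(t_{\max}\).
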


\begin{proof}
Riesz representation applied to the right side of
\cref{eq:variational-rhs} defines \(R\).  Bilinearity gives
\[
  b(u,u,\cdot)-b(v,v,\cdot)
  =
  b(u-v,u,\cdot)+b(v,u-v,\cdot),
\]
so \(R(t,\cdot)\) is Lipschitz on the \(H\)-ball of radius \(R_\rho\) with
constant \(\|a\|+2C_bR_\rho\), and its norm there is bounded by \(L_\rho\).
Picard--Lindel\"of gives the solution under the displayed interval
condition.  Replacing \(\|u_0\|_H\) by the common bound \(R_0\) gives a
uniform local-existence time.  Differentiating \(\|u\|_H^2\), using
\(b(u,u,u)=0\), and integrating gives
\cref{eq:variational-energy}.  The pointwise work estimate absorbs one copy of
the diffusion integral and gives \cref{eq:variational-apriori}.  Under the
hypotheses of the global continuation statement, the estimate on every
partial interval keeps the solution in the ball of radius \(R_0\).  A
uniform local-existence time is
chosen as
\[
  \delta=
  \frac{1}{\|a\|(R_0+1)+C_b(R_0+1)^2+M+1}>0.
\]
Uniqueness permits local solutions with matching endpoint data to be
concatenated into a maximal solution.  If its right endpoint were below
\(t_{\max}\), the uniform local-existence time would extend it, a
contradiction.
Composition of the variational right-hand side with \(Q\), followed by the triangle inequality,
gives \cref{eq:variational-dual-derivative}.  The uniform bound on \(h\) and
the elementary inequality \((x+y)^2\leq2x^2+2y^2\) give
\cref{eq:variational-dual-square}; testing the coefficient ODE by \(Q\phi\)
gives the derivative identity.
\end{proof}

\begin{proposition}[Two-dimensional spectral dual estimate]
\label{prop:box-spectral-dual-l2}
Let \(Q\subset\mathbb R^2\) be a rectangle, and let
\((H_m,E_m,Q_m)\) be the data associated with one of the spectral Galerkin
spaces in \cref{prop:box-spectral-galerkin-spaces}.  Equip \(H_m\) with the
diffusion form and the integral convection form
\(b_Q\) of \cref{prop:box-ladyzhenskaya-convection}.  Let
\(\mathcal F:[a,b]\to V_Q'\) be strongly measurable, let
\(U_m:[a,b]\to H_m\), and define
\[
  \mathcal D_m(t)(\phi)
  =
  -A_Q(E_mU_m(t),E_mQ_m\phi)
  -b_Q(E_mU_m(t),E_mU_m(t),E_mQ_m\phi)
  +\mathcal F(t)(E_mQ_m\phi).
\]
Suppose \(a\leq b\), \(f\in L^2(a,b)\), \(f\geq0\), and
\begin{align*}
  |\mathcal F(t)(\phi)|&\leq f(t)\norm{\phi}_{V_Q},\\
  \norm{J_QE_mU_m(t)}_{H_Q}&\leq H_*,\\
  \int_a^b\norm{E_mU_m(t)}_{V_Q}^2\,dt&\leq R_*^2,\\
  \int_a^b|f(t)|^2\,dt&\leq F_*^2,
\end{align*}
where \(H_*\geq0\) and \(E_mU_m\in L^2(a,b;V_Q)\).  Then
\(\mathcal D_m\in L^2(a,b;V_Q')\), and
\begin{equation}\label{eq:box-spectral-dual-square}
  \int_a^b\norm{\mathcal D_m(t)}_{V_Q'}^2\,dt
  \leq
  2(1+c_QC_LH_*)^2R_*^2+2F_*^2.
\end{equation}
\end{proposition}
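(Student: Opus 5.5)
The plan is a pointwise-in-time estimate of \(\norm{\mathcal D_m(t)}_{V_Q'}\), followed by squaring and integrating. Fix \(t\) and \(\phi\in V_Q\), and write \(u=E_mU_m(t)\) and \(\psi=E_mQ_m\phi\). By \cref{eq:energy-test-projection}, \(\psi=P_m^V\phi\) is an orthogonal projection in \(V_Q\), so \(\norm{\psi}_{V_Q}\leq\norm{\phi}_{V_Q}\). This contractivity makes every bound uniform in \(m\). The three terms are then bounded as follows.

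\begin{itemize}
\item Diffusion: Cauchy--Schwarz gives \(|A_Q(u,\psi)|\leq\norm{G_Qu}\,\norm{G_Q\psi}\leq\norm{u}_{V_Q}\norm{\phi}_{V_Q}\).
\item Convection: \cref{eq:box-diagonal-dual} gives \(|b_Q(u,u,\psi)|\leq c_QC_L\norm{J_Qu}_{H_Q}\norm{u}_{V_Q}\norm{\psi}_{V_Q}\leq c_QC_LH_*\norm{u}_{V_Q}\norm{\phi}_{V_Q}\).
\item Forcing: the hypothesis gives \(|\mathcal F(t)(\psi)|\leq f(t)\norm{\psi}_{V_Q}\leq f(t)\norm{\phi}_{V_Q}\).
\end{itemize}

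These are exactly the three hypotheses of the dual estimate in \cref{thm:variational-finite-dimensional}, with test map \(E_mQ_m\), \(c_A=1\), \(c_B=c_QC_L\), \(h(t)=\norm{J_Qu}_{H_Q}\leq H_*\), and \(d(t)=\norm{E_mU_m(t)}_{V_Q}\). Inequality \cref{eq:variational-dual-square} then yields
\[
\norm{\mathcal D_m(t)}_{V_Q'}^2\leq 2(1+c_QC_LH_*)^2\norm{E_mU_m(t)}_{V_Q}^2+2f(t)^2 .
\]
Integrating over \([a,b]\) and using the two integral hypotheses gives \cref{eq:box-spectral-dual-square}.

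The main obstacle is measurability: we must know that \(\mathcal D_m\) is strongly measurable as a \(V_Q'\)-valued map, so that it defines an element of \(L^2(a,b;V_Q')\) rather than merely having a finite upper integral. I would obtain this by writing \(\mathcal D_m(t)\) as a composition and sum of continuous maps applied to strongly measurable inputs. The map \(t\mapsto E_mU_m(t)\) is strongly measurable as a representative of an \(L^2(a,b;V_Q)\) class. The linear map \(v\mapsto -A_Q(v,E_mQ_m\,\cdot)\) and the quadratic map \(v\mapsto -b_Q(v,v,E_mQ_m\,\cdot)\) are continuous from \(V_Q\) to \(V_Q'\), the latter being locally Lipschitz by boundedness of \(b_Q\). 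The forcing term is \(\mathcal F(t)\circ(E_mQ_m)\), which is the image of \(\mathcal F(t)\) under the continuous adjoint of \(E_mQ_m\) and hence strongly measurable. Continuous maps preserve strong measurability, so \(\mathcal D_m\) is strongly measurable.

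With measurability in hand, the integrated pointwise bound shows that \(\norm{\mathcal D_m}_{V_Q'}^2\) is dominated by an integrable function. Hence \(\mathcal D_m\) belongs to \(L^2(a,b;V_Q')\) and its norm is controlled by \cref{eq:box-spectral-dual-square}.
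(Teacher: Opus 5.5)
Your proposal is correct and follows the paper's proof essentially step for step. The shared steps are: contractivity of \(E_mQ_m=P_m^V\); the gradient Cauchy--Schwarz bound together with \cref{eq:box-diagonal-dual} for the pointwise dual estimate; squaring with \((x+y)^2\leq2x^2+2y^2\) and integrating; and strong measurability from composing continuous maps with the strongly measurable inputs \(E_mU_m\) and \(\mathcal F\). The only cosmetic difference is that you obtain the triangle-inequality and squaring step by citing the dual estimate of \cref{thm:variational-finite-dimensional}, whereas the paper writes that step out directly.
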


\begin{proof}
Since \(E_mQ_m\) is a contraction,
\(\norm{E_mQ_m\phi}_{V_Q}\leq\norm{\phi}_{V_Q}\).  The gradient form and
\cref{eq:box-diagonal-dual} therefore give
\[
  \norm{\mathcal D_m(t)}_{V_Q'}
  \leq
  (1+c_QC_LH_*)\norm{E_mU_m(t)}_{V_Q}+f(t).
\]
The diffusion and convection terms are strongly measurable because
\(E_mU_m\) is strongly measurable and the corresponding maps are continuous.
Strong measurability of \(\mathcal F\) is preserved by composition with
\(E_mQ_m\).
Thus \(\mathcal D_m\in L^2(a,b;V_Q')\).
Squaring with \((x+y)^2\leq2x^2+2y^2\) and integrating gives
\cref{eq:box-spectral-dual-square}.
\end{proof}

\begin{proposition}[Unforced spectral Galerkin solutions]
\label{prop:box-zero-forcing-spectral-family}
Let the paired spectral bases and finite-dimensional spaces be those constructed in
\cref{prop:compact-embedding-singular-bases,prop:box-spectral-galerkin-spaces}.
For \(a\leq b\) and \(u_0\in H_Q\), define
\[
  u_{0,m}=P_mu_0\in H_m,
  \qquad
  \rho_{a,b}(u_0)
  =\sqrt{1+\lvert[a,b]\rvert}\,\norm{u_0}_{H_Q}.
\]
The unforced two-dimensional variational problem on every \(H_m\) has a
solution \(U_m:[a,b]\to H_m\) with \(U_m(a)=u_{0,m}\), and
\begin{align}
  \sup_{t\in[a,b]}\norm{U_m(t)}_{H_m}
    &\leq\norm{u_0}_{H_Q},\label{eq:box-zero-h-bound}\\
  \int_a^b\norm{E_mU_m(t)}_{V_Q}^2\,dt
    &\leq\rho_{a,b}(u_0)^2.\label{eq:box-zero-energy-bound}
\end{align}
Moreover,
\[
  J_QE_mu_{0,m}=P_mu_0\longrightarrow u_0
  \qquad\text{strongly in }H_Q.
\]
The family \((J_QE_mU_m)_m\) has a subsequence converging strongly in
\(L^2(a,b;H_Q)\).
\end{proposition}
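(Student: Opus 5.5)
Existence and the two bounds come from the global continuation part of \cref{thm:variational-finite-dimensional}, applied on each \(H_m\) with \(\mathcal F=0\), \(g=0\), \(t_{\min}=a\), \(t_{\max}=b\). The convection form \(b_Q\) pulled back through \(E_m\) is skew by \cref{prop:box-spectral-galerkin-spaces}. It obeys a dimension-dependent bound \(\norm{b(u,v,\cdot)}_{W'}\le C_b\norm{u}_H\norm{v}_H\) because \(E_m\) is a linear map on a finite-dimensional space and \(b_Q\) is bounded on \(V_Q^3\). The diffusion pullback is nonnegative. Here \(P_m\) is the orthogonal projection onto \(H_m\), so \(\norm{u_{0,m}}_{H_m}\le\norm{u_0}_{H_Q}\), and we may take \(R_0=\norm{u_0}_{H_Q}\). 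This gives a global solution with \cref{eq:box-zero-h-bound}. With \(\mathcal F=0\), the exact energy identity \cref{eq:variational-energy} yields
\[
  2\int_a^b\norm{G_QE_mU_m}^2\,dt\le\norm{u_0}_{H_Q}^2 .
\]
\Cref{eq:box-galerkin-lift-bound} with \(R=\norm{u_0}\) and \(\Delta^2=\norm{u_0}^2\) (or \(\Delta^2=\norm{u_0}^2/2\)) then gives \(\int\norm{E_mU_m}_{V_Q}^2\le(|I|+1)\norm{u_0}^2=\rho_{a,b}(u_0)^2\). This is \cref{eq:box-zero-energy-bound}; it uses \(J_QE_m=\mathrm{id}\), which holds by \cref{prop:box-spectral-galerkin-spaces}. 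The convergence \(J_QE_mu_{0,m}=P_mu_0\to u_0\) is the strong convergence \(P_m^H h\to h\) from \cref{prop:compact-embedding-singular-bases}.

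The compactness claim is the substantial part. Write \(w_m=J_QE_mU_m\) and fix \(i\in K\). By \cref{eq:box-spectral-coordinate} and the derivative identity in \cref{thm:variational-finite-dimensional} with test map \(Q_m\),
\[
  \frac{d}{dt}\ip{w_m(t)}{f_i}_{H_Q}=\mathcal D_m(t)(g_i)
\]
for \(m\) with \(i\in K_m\). \Cref{prop:box-spectral-dual-l2} with \(H_*=R_*/\sqrt{1+|I|}=\norm{u_0}\) and \(F_*=0\) bounds \(\norm{\mathcal D_m}_{L^2(V_Q')}\) uniformly in \(m\). Cauchy--Schwarz in time then gives
\[
  |\ip{w_m(t)-w_m(s)}{f_i}|\le C\sigma_i^{-1}|t-s|^{1/2}
\]
uniformly in \(m\). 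For \(i\notin K_m\) the coordinate is identically zero, since \(w_m\in H_m\). So for each fixed \(n\), the finite-dimensional projections \(P_n^Hw_m\) are uniformly bounded and equicontinuous in \(C(I;H_n)\). Arzel\`a--Ascoli gives a uniformly convergent subsequence, and a diagonal argument over \(n\) makes every \(P_n^Hw_m\) converge uniformly along one subsequence.

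It remains to show the subsequence is Cauchy in \(L^2(I;H_Q)\). Split
\[
  w_m-w_k=P_n^H(w_m-w_k)+(I-P_n^H)J_Q(E_mU_m-E_kU_k).
\]
The second term is bounded in \(L^2(I;H_Q)\) by \(\norm{J_Q-P_n^HJ_Q}_{V\to H}\cdot2\rho_{a,b}(u_0)\), which is small by \cref{eq:compact-embedding-spectral-tail}. The first term is small for large \(m,k\) by uniform convergence on the finite interval. Completeness of \(L^2(I;H_Q)\) then gives the strong limit.

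The expected obstacle is the Hölder step. The derivative identity only controls coordinates tested against \(Q_mg_i\), so one must check that \(E_mQ_mg_i=P_m^Vg_i=g_i\) once \(i\in K_m\). Only then is the test vector independent of \(m\), with \(\norm{g_i}_{V_Q}=\sigma_i^{-1}\); this is exactly where the \(V\)- and \(H\)-bases must be paired. Measurability of the Galerkin paths as Bochner classes is routine, since they are continuous.
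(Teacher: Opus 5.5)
Your proposal is correct and follows the paper's proof. Global continuation with \(g=0\) and the graph identity give the two bounds. The dual estimate of \cref{prop:box-spectral-dual-l2} and the coordinate identity \cref{eq:box-spectral-coordinate} then give finite-mode \(1/2\)-H\"older control, which together with the spectral tail \cref{eq:compact-embedding-spectral-tail} yields the strongly convergent subsequence. The only difference is that you inline the arguments of \cref{prop:dual-l2-holder} and \cref{thm:spectral-l2-compactness} as a diagonal-plus-Cauchy argument instead of citing them.
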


\begin{proof}
Since \(u_{0,m}=P_mu_0\) and \(P_m\) is an orthogonal projection,
\[
  \norm{u_{0,m}}_{H_Q}\leq\norm{u_0}_{H_Q}.
\]
Completeness of the basis gives strong convergence of \(P_mu_0\) to \(u_0\).

For each \(m\) the forcing vanishes, so
\cref{thm:variational-finite-dimensional} applies with \(g=0\).  The energy estimate
extends \(U_m\) to the full interval and gives
\cref{eq:box-zero-h-bound} and bounds the dissipation integral by
\(\norm{u_0}_{H_Q}^2\).  The exact graph
identity
\[
  \norm{E_mU_m}_{V_Q}^2
  =
  \norm{J_QE_mU_m}_{H_Q}^2
  +\norm{G_QE_mU_m}_{L^2}^2
\]
then gives \cref{eq:box-zero-energy-bound}.

Apply \cref{prop:box-spectral-dual-l2} with \(f=0\),
\(H_*=\norm{u_0}_{H_Q}\), and \(R_*=\rho_{a,b}(u_0)\).  The coordinate
identity \cref{eq:box-spectral-coordinate} converts the resulting uniform dual
bound into a uniform \(1/2\)-H\"older estimate for each fixed projection.
Compactness of
\(J_Q\), the uniform \(V_Q\)-bound, and the uniform \(H_Q\)-bound give a strongly
convergent subsequence in \(L^2(a,b;H_Q)\).
\end{proof}

\begin{proposition}[Finite-mode regularity from a dual \(L^2\) derivative]
\label{prop:dual-l2-holder}
Let \(I=[a,b]\), let \(H\) be a real Hilbert space, let \(K\) be an at most
countable index set, and let \((f_i)_{i\in K}\) be a Hilbert basis of \(H\).
Choose nested finite sets
\[
  K_0\subseteq K_1\subseteq\cdots,
  \qquad
  \bigcup_{m\geq0}K_m=K,
\]
and let \(P_m\) be orthogonal projection onto
\(\operatorname{span}\{f_i:i\in K_m\}\).  Let
\((W_n)_{n\in\mathbb N}\) be finite-dimensional real Hilbert spaces, let
\(J:V\to H\) be continuous and linear, and let
\((\psi_i)_{i\in K}\) lie in a normed test space \(X\).  For each \(n\),
suppose that \(u_n:I\to W_n\) is a finite-dimensional variational solution,
\(v_n:I\to V\) is a lift, \(Q_n:X\to W_n\) is continuous and linear, and
\(\mathcal D_n\in L^2(I;X')\) satisfies
\begin{align}
  \ip{u_n(t)}{Q_n\psi_i}_{W_n}
    &=\ip{Jv_n(t)}{f_i}_H,\label{eq:coordinate-compatibility}\\
  \frac{d}{dt}\ip{u_n(t)}{Q_n\psi_i}_{W_n}
    &=\mathcal D_n(t)(\psi_i).\label{eq:coordinate-derivative}
\end{align}
If
\[
  \int_I\norm{\mathcal D_n(t)}_{X'}^2\,dt\leq M^2
  \qquad(n\in\mathbb N),
\]
then, with
\[
  \Gamma_m=\sum_{i\in K_m}\norm{\psi_i}_X,
\]
one has
\begin{align}
  \int_I\norm{\partial_t(P_mJv_n)(t)}_H^2\,dt
    &\leq (\Gamma_mM)^2,\label{eq:finite-mode-derivative-l2}\\
  \norm{P_mJv_n(t)-P_mJv_n(s)}_H
    &\leq \Gamma_mM\lvert t-s\rvert^{1/2}
      \qquad(s,t\in I).\label{eq:finite-mode-holder}
\end{align}
In particular, for each \(m\), the family \(\{P_mJv_n\}_n\) is
equicontinuous.
\end{proposition}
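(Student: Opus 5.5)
The plan is to expand the finite projection in coordinates and reduce everything to scalar estimates on each coordinate. For each fixed \(m\) and \(n\),
\[
  P_mJv_n(t)=\sum_{i\in K_m}\ip{Jv_n(t)}{f_i}_H f_i
  =\sum_{i\in K_m}c_{n,i}(t)f_i,
  \qquad
  c_{n,i}(t):=\ip{u_n(t)}{Q_n\psi_i}_{W_n},
\]
where the second equality is \cref{eq:coordinate-compatibility}. Since \(u_n\) is a finite-dimensional variational solution, it is \(C^1\) in \(t\), and \(Q_n\psi_i\) is a fixed vector of \(W_n\). Hence each \(c_{n,i}\) is \(C^1\) on \(I\), with derivative \(\mathcal D_n(t)(\psi_i)\) by \cref{eq:coordinate-derivative}.

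Because \(K_m\) is finite, \(P_mJv_n\) is a finite sum of \(C^1\) scalar functions times fixed vectors. It is therefore differentiable, and
\[
  \partial_t(P_mJv_n)(t)=\sum_{i\in K_m}\mathcal D_n(t)(\psi_i)f_i .
\]
The triangle inequality gives
\[
  \norm{\partial_t(P_mJv_n)(t)}_H\le\sum_{i\in K_m}\norm{\mathcal D_n(t)}_{X'}\norm{\psi_i}_X=\Gamma_m\norm{\mathcal D_n(t)}_{X'}.
\]
I use the triangle inequality here rather than Parseval, which would give the sharper \(\ell^2\) sum, because \(\Gamma_m\) is defined with the \(\ell^1\) sum. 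Squaring and integrating, together with the hypothesis on \(\int_I\norm{\mathcal D_n}_{X'}^2\), gives \cref{eq:finite-mode-derivative-l2}.

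For \cref{eq:finite-mode-holder}, take \(s\le t\). Apply the fundamental theorem of calculus to the \(C^1\) (hence absolutely continuous) path \(P_mJv_n\), then Cauchy--Schwarz:
\[
  \norm{P_mJv_n(t)-P_mJv_n(s)}_H\le\int_s^t\norm{\partial_r(P_mJv_n)(r)}_H\,dr\le|t-s|^{1/2}\Bigl(\int_I\norm{\partial_r(P_mJv_n)}_H^2\,dr\Bigr)^{1/2},
\]
which is at most \(\Gamma_mM|t-s|^{1/2}\). This modulus of continuity is uniform in \(n\), so equicontinuity of \(\{P_mJv_n\}_n\) follows at once.

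The main obstacle is a regularity and measurability point. The pointwise bound uses \(\norm{\mathcal D_n(t)}_{X'}\) for every \(t\), but \(\mathcal D_n\) is only given as an element of \(L^2(I;X')\). One has to check that the derivative identity \cref{eq:coordinate-derivative} holds for a representative of \(\mathcal D_n\), so that the pointwise estimate holds at least almost everywhere; that is enough for both integrals. To avoid relying on FTC for Bochner-valued functions, I would instead apply FTC to each scalar coordinate \(c_{n,i}\), which is \(C^1\), and bound the difference coordinatewise:
\[
  |c_{n,i}(t)-c_{n,i}(s)|\le\norm{\psi_i}_X\int_s^t\norm{\mathcal D_n(r)}_{X'}\,dr .
\]
Summing over \(i\in K_m\) and then applying Cauchy--Schwarz gives the Hölder bound directly, with the same constant \(\Gamma_mM\).
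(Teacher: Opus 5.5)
Your proposal is correct and matches the paper's proof essentially step for step. Both expand \(P_m\) in the coordinates \(f_i\), differentiate the finite sum using the two coordinate identities, and bound \(\norm{\partial_t(P_mJv_n)(t)}_H\) by \(\Gamma_m\norm{\mathcal D_n(t)}_{X'}\) via the triangle inequality. Both then square and integrate, and use the interval fundamental theorem of calculus with Cauchy--Schwarz for the H\"older bound. Your scalar-coordinate variant of the last step avoids vector-valued integration and gives the same constant \(\Gamma_mM\).
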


\begin{proof}
The finite-rank projector has the coordinate formula
\[
  P_mx=\sum_{i\in K_m}\ip{x}{f_i}_Hf_i.
\]
Equations \cref{eq:coordinate-compatibility,eq:coordinate-derivative} and
differentiation of the finite sum therefore give
\[
  \partial_t(P_mJv_n)(t)
    =\sum_{i\in K_m}\mathcal D_n(t)(\psi_i)f_i.
\]
The triangle inequality, the dual norm estimate, and
\(\norm{f_i}_H=1\) imply
\[
  \norm{\partial_t(P_mJv_n)(t)}_H
  \leq
  \sum_{i\in K_m}\lvert\mathcal D_n(t)(\psi_i)\rvert
  \leq \Gamma_m\norm{\mathcal D_n(t)}_{X'}.
\]
Squaring and integrating proves
\cref{eq:finite-mode-derivative-l2}.  The interval fundamental theorem of
calculus and Cauchy--Schwarz yield, by symmetry for \(s\leq t\),
\[
  \norm{P_mJv_n(t)-P_mJv_n(s)}_H
  \leq
  \lvert t-s\rvert^{1/2}
  \left(
    \int_s^t\norm{\partial_r(P_mJv_n)(r)}_H^2\,dr
  \right)^{1/2},
\]
which gives \cref{eq:finite-mode-holder}.
\end{proof}
\section{Compactness and the Leray--Hopf Limit}

The compactness argument uses the orthogonal projections associated with
nested finite subsets of a Hilbert basis.  The same construction applies in
both finite- and infinite-dimensional Hilbert spaces.

\begin{proposition}[Galerkin projectors from a Hilbert basis]
\label{prop:hilbert-projectors}
For a Hilbert basis \((b_i)_{i\in K}\) and nested finite sets
\(K_n\uparrow K\), the spaces \(H_n=\operatorname{span}\{b_i:i\in K_n\}\)
are nested and finite-dimensional.  Their orthogonal projectors satisfy
\(P_n^2=P_n\) and \(P_nx\to x\) for every \(x\in H\).
\end{proposition}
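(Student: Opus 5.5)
The plan is to use the explicit coordinate formula \(P_nx=\sum_{i\in K_n}\ip{x}{b_i}b_i\), which is a finite sum since \(K_n\) is finite. Nestedness of the spaces \(H_n\) follows at once from \(K_n\subseteq K_{n+1}\): every spanning vector of \(H_n\) is a spanning vector of \(H_{n+1}\). Each \(H_n\) is spanned by the finitely many vectors \(b_i\), \(i\in K_n\), so it is finite-dimensional and hence closed. The orthogonal projection onto \(H_n\) is therefore well defined, and \(P_n^2=P_n\) is the standard idempotence of an orthogonal projector onto a closed subspace. Concretely, orthonormality of the \(b_i\) shows that the finite-sum formula fixes every element of \(H_n\) and that \(x-P_nx\perp H_n\); so this formula is the orthogonal projection, and idempotence follows.

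For strong convergence \(P_nx\to x\), I would use Parseval's identity for the Hilbert basis: \(\norm{x}^2=\sum_{i\in K}\lvert\ip{x}{b_i}\rvert^2\), a summable family over \(K\). By Pythagoras,
\[
\norm{x-P_nx}^2=\sum_{i\in K\setminus K_n}\lvert\ip{x}{b_i}\rvert^2 .
\]
Given \(\varepsilon>0\), summability provides a finite set \(F\subseteq K\) whose complementary tail sum is below \(\varepsilon\). Since \(F\) is finite and the \(K_n\) are nested with union \(K\), there is an \(N\) with \(F\subseteq K_n\) for all \(n\geq N\). For such \(n\), the tail over \(K\setminus K_n\) is bounded by the tail over \(K\setminus F\), hence by \(\varepsilon\).

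An alternative route avoids the tail computation. The union \(\bigcup_n H_n\) contains every finite linear combination of basis vectors, so it is dense. Moreover, \(\norm{x-P_nx}=\operatorname{dist}(x,H_n)\) is nonincreasing in \(n\) because the spaces are nested. Density then gives convergence of this distance to zero.

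The main (mild) obstacle is bookkeeping rather than analysis. One must identify the abstract orthogonal projection with the coordinate sum, and then extract a single index \(N\) absorbing a finite set \(F\). That extraction uses each element of \(F\) lying in some \(K_{n_i}\), taking \(N=\max_i n_i\), and nestedness. Everything else is routine. The same argument works verbatim whether \(K\) is finite or countable, which is the uniformity emphasized before the statement.
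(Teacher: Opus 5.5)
Your proof is correct, and your second route is essentially the paper's argument. The paper's proof is one line: the union of the $H_n$ is dense, and the projectors are contractive. Written out, for $y\in H_N$ and $n\geq N$ one has $P_ny=y$, so $\norm{x-P_nx}\leq\norm{x-y}+\norm{P_n(y-x)}\leq2\norm{x-y}$. Where the paper uses contractivity, you use the best-approximation identity $\norm{x-P_nx}=\operatorname{dist}(x,H_n)$ together with monotonicity in $n$; this is an equivalent way to do the same bookkeeping.

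Your primary Parseval route is genuinely different. It costs more setup: you must identify $P_n$ with the coordinate sum $\sum_{i\in K_n}\ip{x}{b_i}b_i$ and invoke summability of the coefficients. In return it gives an exact error formula, $\norm{x-P_nx}^2=\sum_{i\in K\setminus K_n}\lvert\ip{x}{b_i}\rvert^2$, which is the more quantitative statement.

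The density-plus-contractivity argument needs neither the explicit formula nor any summability bookkeeping. It also applies verbatim to any increasing sequence of closed subspaces with dense union. Contractivity is also the tool the paper reuses later: in the proof of $\norm{J-P_mJ}_{V\to H}\to0$, strong convergence at the finitely many centers of an $\varepsilon$-net is combined with $\norm{P_m}\leq1$ to control the remaining points.
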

\begin{proof}
The union of the partial spans is dense, and the projectors are contractive.
\end{proof}

\subsection{Spectral Compactness in Space--Time}

Let \(I=[0,T]\), let \(V\) be a Banach space, let \(H\) be a Hilbert space,
and let \(J:V\to H\) be compact and linear.  Write \(P_m\) for the
Hilbert-basis projectors of \cref{prop:hilbert-projectors}.  For continuous
lifts \(v_n:I\to V\), set
\[
  u_n(t)=Jv_n(t).
\]
The derivative hypotheses of \cref{prop:dual-l2-holder} imply the
finite-mode hypothesis of \cref{thm:spectral-l2-compactness} with exponent
\(\alpha=1/2\).

\begin{theorem}[Spectral compactness in \(L^2(I;H)\)]
\label{thm:spectral-l2-compactness}
Suppose
\[
  \sup_n\norm{v_n}_{L^2(I;V)}\leq R_V,
  \qquad
  \sup_n\norm{u_n}_{L^\infty(I;H)}\leq R_H.
\]
Assume that there is \(\alpha>0\) such that for every \(m\)
there is \(C_m<\infty\) such that
\[
  \norm{P_m u_n(t)-P_m u_n(s)}_H
  \leq C_m |t-s|^\alpha
  \qquad(n\in\mathbb N,\ s,t\in I).
\]
Then there exist indices \(n_j\to\infty\) and \(u\in L^2(I;H)\) such that
\[
  u_{n_j}\longrightarrow u
  \qquad\text{strongly in }L^2(I;H).
\]
\end{theorem}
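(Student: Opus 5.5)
The plan is to show that \((u_n)\) is totally bounded in \(L^2(I;H)\), or more concretely, to extract a Cauchy subsequence by a diagonal argument. The space \(L^2(I;H)\) is complete, so a Cauchy subsequence has a limit \(u\). Each \(u_n=Jv_n\) is continuous, since \(J\) is bounded and \(v_n\) is continuous. The \(L^\infty\) bound therefore holds pointwise: \(\norm{u_n(t)}_H\le R_H\) for every \(t\).

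The argument splits each \(u_n\) into a low-mode part \(P_mu_n\) and a tail \((1-P_m)u_n\).

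\emph{Tail estimate.} The first step is to prove \(\norm{(1-P_m)J}_{V\to H}\to0\). The projections \(P_m\) are contractions converging strongly to the identity by \cref{prop:hilbert-projectors}. Strong convergence of equibounded operators is uniform on compact sets, and \(J\) maps the unit ball of \(V\) into a relatively compact set, which gives the claim (the same argument as for \cref{eq:compact-embedding-spectral-tail}). Writing \(\varepsilon_m:=\norm{J-P_mJ}\), we get
\[
  \norm{(1-P_m)u_n}_{L^2(I;H)}\le\varepsilon_m\norm{v_n}_{L^2(I;V)}\le\varepsilon_m R_V,
\]
uniformly in \(n\).

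\emph{Low modes.} Fix \(m\). The functions \(P_mu_n:I\to P_mH\) take values in a finite-dimensional space. They are bounded by \(R_H\) and uniformly \(\alpha\)-Hölder with constant \(C_m\). The Arzel\`a--Ascoli theorem on the compact interval \(I\), with values in a closed ball of the finite-dimensional space \(P_mH\) (which is compact), gives a subsequence along which \(P_mu_n\) converges uniformly. Uniform convergence on \(I\) implies convergence in \(L^2(I;H)\), with norm at most \(|I|^{1/2}\) times the sup norm.

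\emph{Diagonal extraction.} We extract nested subsequences \(S_0\supseteq S_1\supseteq\cdots\) such that \(P_mu_n\) converges uniformly along \(S_m\). Let \(n_j\) be the \(j\)-th element of \(S_j\). Then for every \(m\), the sequence \(P_mu_{n_j}\) converges uniformly as \(j\to\infty\), because the tail of \((n_j)\) lies in \(S_m\). For \(j,k\) large we estimate
\[
  \norm{u_{n_j}-u_{n_k}}_{L^2(I;H)}
  \le 2\varepsilon_m R_V+|I|^{1/2}\sup_{t\in I}\norm{P_mu_{n_j}(t)-P_mu_{n_k}(t)}_H.
\]
We first choose \(m\) with \(2\varepsilon_mR_V<\delta/2\), and then \(j,k\) large enough that the second term is below \(\delta/2\). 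Hence \((u_{n_j})\) is Cauchy in \(L^2(I;H)\), and completeness yields the limit \(u\).

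\emph{Main obstacle.} The step I expect to be hardest is the operator-norm tail estimate, because it requires care about where compactness is used. The key point is that \(\overline{J(B_V)}\) is compact. Given \(\delta\), we cover it by finitely many \(\delta\)-balls centered at points \(h_1,\dots,h_N\). We then choose \(m\) such that \(\norm{(1-P_m)h_k}<\delta\) for all \(k\). Contractivity of \(1-P_m\) gives \(\norm{(1-P_m)Jv}\le3\delta\) for all \(\norm v\le1\). For every \(m'\ge m\), nestedness gives \((1-P_{m'})=(1-P_{m'})(1-P_m)\), so the bound persists for all larger indices.

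A secondary technical point is formal measurability: continuity of \(u_n\) makes it a Bochner \(L^2\) element. Since \(I\) has finite measure, the \(L^2\) bound follows from the pointwise sup bound, so no further integrability issue arises.
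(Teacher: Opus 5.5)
Your proposal is correct and follows essentially the same route as the paper: the operator-norm tail bound $\norm{J-P_mJ}_{V\to H}\to0$ obtained from a finite $\varepsilon$-net and contractivity, made uniform in $n$ by the $L^2(I;V)$ bound, combined with Arzel\`a--Ascoli for each fixed finite projection. The only difference is presentational: you carry out the diagonal Cauchy extraction explicitly, whereas the paper concludes relative compactness (total boundedness) of $\{u_n\}$ in $L^2(I;H)$ directly.
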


\begin{proof}
The orthogonal projectors satisfy \(\norm{P_m}\leq1\) and converge strongly
to the identity.  Compactness of \(J\) implies
\[
  \norm{J-P_mJ}_{V\to H}\longrightarrow0.
\]
Indeed, the image under \(J\) of the closed unit ball is totally bounded.
A finite \(\varepsilon\)-net reduces uniform convergence on this image to
pointwise convergence at finitely many centers; contractivity controls the
errors between a point and its center.

Consequently,
\[
  \norm{u_n-P_m u_n}_{L^2(I;H)}
  \leq \norm{J-P_mJ}_{V\to H}\norm{v_n}_{L^2(I;V)}
  \leq R_V\norm{J-P_mJ}_{V\to H},
\]
uniformly in \(n\).  For fixed \(m\), the paths \(P_m u_n\) take values in a
bounded subset of the finite-dimensional space \(H_m\) and share the stated
H\"older modulus.  Arzel\`a--Ascoli makes their closure compact in
\(C(I;H_m)\), hence also in \(L^2(I;H)\).  Together with the uniform tail
estimate, this proves that \(\{u_n\}\) is relatively compact in
\(L^2(I;H)\).
\end{proof}
\subsection{The Finite-Horizon Weak-Solution Limit}\label{sec:proof-main}

\begin{proof}[Proof of \cref{thm:box-energy-weak-limit}]
\Cref{prop:compact-embedding-singular-bases} constructs paired
singular-value bases for \(J_Q\), and
\cref{prop:box-spectral-galerkin-spaces} gives the associated
finite-dimensional subspaces of \(H_Q\), lifts to \(V_Q\), and test
projections.  Choose the subsequence \((m_k)\) supplied by
\cref{thm:spectral-l2-compactness}, and write
\[
  u_{V,k}=E_{m_k}U_{m_k},
  \qquad
  u_{H,k}=J_Qu_{V,k},
  \qquad
  \phi_k=E_{m_k}Q_{m_k}\phi=P_{m_k}^V\phi.
\]
By \cref{eq:energy-test-projection}, \(\phi_k\to\phi\) strongly in \(V_Q\).
For every \(k\), the Galerkin solution satisfies the time-tested identity
obtained by integration by parts.  The projection identity expresses the
velocity term using the original test \(\phi\), while diffusion and convection
use \(\phi_k\); the initial term contains the orthogonal projection of \(u_0\).

After passing to a further subsequence, the uniform \(L^2(I;V_Q)\) bound and
Hilbert weak compactness give
\[
  u_{V,k}\rightharpoonup u_V\quad\text{in }L^2(I;V_Q),
  \qquad
  u_{H,k}\to u_H\quad\text{in }L^2(I;H_Q).
\]
Continuity of the operator induced by \(J_Q\) on the Bochner spaces identifies
\(J_Qu_V=u_H\).
Weak lower semicontinuity gives the asserted \(L^2(I;V_Q)\) bound.  Since
the time-dependent gradient map is continuous and linear,
\[
  G_Qu_{V,k}\rightharpoonup G_Qu_V
  \quad\text{in }L^2(I;L^2(Q;\mathbb R^{2\times2})).
\]
Strong convergence in \(L^2(I;H_Q)\) implies convergence in
measure; the uniform pointwise \(H_Q\)-estimate then passes to the essential
supremum and yields
\[
  \norm{u_H}_{L^\infty(I;H_Q)}
  \leq\norm{u_0}_{H_Q}.
\]

A diagonal subsequence gives uniform convergence on \(I\) of every fixed
finite-dimensional projection of the \(H_Q\)-valued solutions.  The uniform \(H_Q\)-bound
and convergence on the dense union of finite spectral spaces imply, for every
\(t\in I\),
\[
  u_{H,k}(t)\rightharpoonup u_c(t)\quad\text{in }H_Q.
\]
For each \(y\in H_Q\), the functions
\[
  t\longmapsto (u_c(t),P_my)_{H_Q}
\]
are continuous and converge uniformly as \(m\to\infty\) to
\(t\mapsto(u_c(t),y)_{H_Q}\).  Hence \(u_c\) is weakly continuous.  An
almost-everywhere extraction from the strong \(L^2(I;H_Q)\) convergence
identifies \(u_c\) with \(u_H\) almost everywhere.  At the left endpoint the
Galerkin values are the orthogonal projections of \(u_0\), so
\(u_c(a)=u_0\).  Pointwise weak lower semicontinuity gives
\(\norm{u_c(t)}_{H_Q}\leq\norm{u_0}_{H_Q}\).

Fix \(t\in I\), and let \(R_t\) restrict a time-dependent gradient to
\([a,t]\).  The direct-sum vectors
\[
  X_k(t)=
  \bigl(u_{H,k}(t),\sqrt2\,R_tG_Qu_{V,k}\bigr)
\]
converge weakly in
\[
  H_Q\oplus_2 L^2([a,t];L^2(Q;\mathbb R^{2\times2}))
\]
to
\[
  X(t)=\bigl(u_c(t),\sqrt2\,R_tG_Qu_V\bigr).
\]
The exact finite energy identity and contractivity of the initial projection
give \(\norm{X_k(t)}\leq\norm{u_0}_{H_Q}\).  Weak lower semicontinuity
therefore yields
\[
  \norm{u_c(t)}_{H_Q}^2
  +2\int_a^t\norm{G_Qu_V(s)}_{L^2(Q)}^2\,ds
  \leq\norm{u_0}_{H_Q}^2.
\]
Since \(t\) was arbitrary, the energy inequality holds at every time.

Let \(R_Q:V_Q\to L^4(Q;\mathbb R^2)\) be the map from the Ladyzhenskaya
estimate.
Applied to \(u_{V,k}-u_V\), the space--time interpolation estimate gives
\[
  \norm{R_Q(u_{V,k}-u_V)}_{L^2(I;L^4)}^2
  \leq C_L
  \norm{J_Q(u_{V,k}-u_V)}_{L^2(I;H_Q)}
  \norm{u_{V,k}-u_V}_{L^2(I;V_Q)}.
\]
The first factor tends to zero and the second is bounded, so the velocities
converge strongly in \(L^2(I;L^4)\).  Skew-symmetry moves the spatial
derivative in
\(b_Q(u_{V,k},u_{V,k},\phi_k)\eta\) onto the test field.  The two velocity
factors converge strongly in \(L^2(I;L^4)\), while
\(\eta\,G_Q\phi_k\to\eta\,G_Q\phi\) in
\(L^\infty(I;L^2)\).  H\"older's inequality therefore passes the convection
integral.  Weak \(L^2(I;V_Q)\) convergence passes diffusion against the same
strongly converging tests, and strong convergence of the initial
projections passes the initial pairing.  Passing to the limit in the
finite-dimensional identities gives the weak equation.
\end{proof}

\section{Bounded Open Domains and Global Time}
\label{sec:open-domain-global}

\begin{proof}[Proof of \cref{thm:open-domain-global}]
Choose \(Q\) with \(\overline\Omega\subset Q^\circ\).  The velocity--gradient
pairs of fields in \(\mathcal D_\sigma(\Omega)\), extended by zero, belong to
\(\mathcal D_\sigma(Q)\).
Consequently \(V_\Omega\) and \(H_\Omega\) are closed subspaces of
\(V_Q\) and \(H_Q\).  Restricting \(J_Q\) to \(V_\Omega\) and
regarding its range in \(H_\Omega\) gives a compact map
\(J_\Omega:V_\Omega\to H_\Omega\).  Its range is dense by construction,
and injectivity follows from the graph-gradient closure.  Compact support
gives transport integration by parts on the core.  Continuity then yields
\(b_\Omega(v,w,w)=0\) for \(v,w\in V_\Omega\).  The estimate
\cref{eq:box-ladyzhenskaya} applies to the zero extensions and bounds
\(b_\Omega\) on \(V_\Omega^3\).  The compact spectral construction
therefore gives finite-dimensional spaces, orthogonal velocity projections
\(P_m\), and compatible energy lifts on \(\Omega\).

For each integer \(N\geq0\), solve the unforced coefficient equation on
\([0,N]\).  Skew cancellation gives the exact finite-dimensional energy
identity
and bounds the velocity in \(L^\infty(0,N;H_\Omega)\) and the energy lift in
\(L^2(0,N;V_\Omega)\), uniformly in \(m\).  The Ladyzhenskaya estimate
also bounds the derivative in \(V_\Omega'\), uniformly in \(m\).  The coefficient
vector field is locally Lipschitz on each finite-dimensional space.  Two
solutions with the same initial projection on different horizons agree on
their common interval by ODE uniqueness and the energy bound.  Thus the
solutions for fixed \(m\) are consistent under restriction to shorter
intervals.

On each \([0,N]\), the spectral compactness theorem gives strong
\(L^2(0,N;H_\Omega)\) precompactness.  A diagonal extraction over the
integer horizons produces one subsequence converging strongly on every
horizon.  The uniform energy bounds and weak sequential compactness of the
separable Hilbert spaces \(L^2(0,N;V_\Omega)\) refine it to a subsequence
converging weakly in the energy space on every horizon.  A further diagonal
extraction makes every fixed finite spectral projection converge uniformly
in time on every \([0,N]\); the strong and weak limits are unchanged.
The finite-horizon limit argument then gives \(u_V^N\), a weakly continuous
velocity representative \(u_c^N\), the weak equation, and the energy
inequality for every time in \([0,N]\).

It remains to identify the representatives on overlaps.  If \(N\leq M\),
horizon compatibility of the Galerkin paths and uniqueness of uniform
limits imply
\[
  \lim_{k\to\infty}P_j u_{m_k}^{N}(t)
  =\lim_{k\to\infty}P_j u_{m_k}^{M}(t)
  \qquad (t\in[0,N],\ j\in\mathbb N).
\]
Self-adjointness of \(P_j\) identifies these limits with the pairings of
\(u_c^N(t)\) and \(u_c^M(t)\) against \(P_jy\), for every
\(y\in H_\Omega\).  Since \(P_jy\to y\) in \(H_\Omega\), the two
representatives agree at every \(t\in[0,N]\).  Define \(u_c(t)\) using
any integer horizon containing \(t\).  Its restriction to each
\([0,N]\) is \(u_c^N\), so it is weakly continuous, attains \(u_0\),
and has the asserted equation and energy inequality on every finite
horizon.
\end{proof}

\section{The Forced Problem}\label{sec:forced}

\Cref{thm:variational-finite-dimensional} extends a local Galerkin solution to
the whole interval once the forcing work has an integrable bound independent
of the solution.  This bound is zero in the unforced case.  For a general
\(V_Q'\)-valued forcing, continuation follows from the absorption estimate
below.  The elementary bound
\(2\mathcal F(v)\leq f^2+\norm{v}_{V_Q}^2\) contains the full graph norm.
For \(Q\), \cref{eq:box-forcing-absorption} uses domination of the
graph norm by the gradient seminorm and absorbs the forcing work into
diffusion with a remainder independent of the solution.

For an admissible forcing \(\mathcal F\), the continuous representative \(r\)
identifies accumulated work with a bounded functional on \(L^2(I;V_Q)\).
This permits passage to the limit in the forcing term under weak convergence
in \(L^2(I;V_Q)\).

\begin{theorem}[Forced spectral Galerkin solutions]
\label{thm:forced-galerkin}
Let \(Q\subset\mathbb R^2\) be a rectangle, let \(I=[a,b]\), let
\(u_0\in H_Q\), and let \(\mathcal F\) be an admissible forcing with control function
\(f\).  Put
\[
  R=\Bigl(\norm{u_0}_{H_Q}^2+(1+C_Q^2)\int_If(t)^2\,dt\Bigr)^{1/2}.
\]
Then every Galerkin space \(H_m\) of
\cref{prop:box-spectral-galerkin-spaces} admits a solution
\(U_m:I\to H_m\) of the forced variational problem with \(U_m(a)=P_mu_0\) and
\begin{equation}\label{eq:forced-bounds}
  \sup_{t\in I}\norm{U_m(t)}_{H_m}\leq R,
  \qquad
  \int_I A_Q(E_mU_m,E_mU_m)\,dt\leq R^2,
  \qquad
  \norm{E_mU_m}_{L^2(I;V_Q)}\leq\sqrt{1+\lvert I\rvert}\,R .
\end{equation}
Moreover, there exist indices \(m_k\to\infty\) and
limits \(u_H\in L^2(I;H_Q)\) and \(u_V\in L^2(I;V_Q)\) such that
\[
  J_QE_{m_k}U_{m_k}\longrightarrow u_H\quad\text{in }L^2(I;H_Q),
  \qquad
  E_{m_k}U_{m_k}\rightharpoonup u_V\quad\text{in }L^2(I;V_Q),
\]
with \(J_Qu_V=u_H\), and every fixed finite-dimensional projection of the
\(H_Q\)-valued solutions converges uniformly on \(I\).
\end{theorem}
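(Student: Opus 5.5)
The plan has three stages: existence on each \(H_m\) with the bounds \cref{eq:forced-bounds}, a uniform dual estimate for the time derivatives, and compactness plus diagonal extraction.

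\textbf{Existence and bounds on \(H_m\).} For fixed \(m\), \cref{prop:box-spectral-galerkin-spaces} supplies the finite-dimensional problem on \(H_m\). Its ingredients are the diffusion \(a_m(x,y)=A_Q(E_mx,E_my)\), the skew convection \(b_m=b_Q(E_m\cdot,E_m\cdot,E_m\cdot)\), and the forcing \(\mathcal F_m(t)=\mathcal F(t)\circ E_m\). This forcing is continuous in the dual norm, because \(r\) is continuous and \(E_m\) is a fixed bounded map, and it is bounded on the compact interval \(I\). The constant \(C_b\) may depend on \(m\), since \(E_m\) is bounded on a finite-dimensional space.

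To apply the global continuation part of \cref{thm:variational-finite-dimensional}, I take \(g(t)=(1+C_Q^2)f(t)^2\). This \(g\) is continuous and hence integrable. The absorption estimate \cref{eq:box-forcing-absorption}, applied with \(v=E_mx\), gives
\[
2\mathcal F_m(t)(x)\le g(t)+a_m(x,x).
\]
I take \(R_0=R\). This choice is admissible because \(\norm{P_mu_0}\le\norm{u_0}\). The theorem then gives a solution on all of \(I\) with \(\sup_t\norm{U_m(t)}\le R\).

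The a priori estimate \cref{eq:variational-apriori} with \(T=b\) bounds \(\int_I a_m(U_m,U_m)\,dt\) by \(R^2\). The lift bound \cref{eq:box-galerkin-lift-bound} with \(\Delta=R\) gives
\[
\norm{E_mU_m}_{L^2(I;V_Q)}\le\sqrt{|I|R^2+R^2}=\sqrt{1+|I|}\,R.
\]
This proves \cref{eq:forced-bounds}.

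\textbf{Uniform dual estimate and finite-mode Hölder bounds.} I apply \cref{prop:box-spectral-dual-l2} with \(H_*=R\), \(R_*=\sqrt{1+|I|}\,R\) and \(F_*^2=\int_If^2\,dt\). Strong measurability of \(\mathcal F\) follows from continuity. This gives a bound on \(\int_I\norm{\mathcal D_m}_{V_Q'}^2\,dt\) that is independent of \(m\).

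Next I feed this into \cref{prop:dual-l2-holder}. The test vectors are \(\psi_i=g_i\), and \(Q_n=Q_m\). The coordinate compatibility \cref{eq:coordinate-compatibility} is \cref{eq:box-spectral-coordinate}. The derivative identity \cref{eq:coordinate-derivative} is the last assertion of \cref{thm:variational-finite-dimensional}, with the test map \(Q_m\) and the \(V'\)-functional \(\mathcal D_m\). The only point to check is that \(\mathcal D_m\) coincides with the functional \(\phi\mapsto\) (Riesz right-hand side paired with \(Q_m\phi\)). This holds by the definition of the pulled-back forms, since \(a_m(U,Q_m\phi)=A_Q(E_mU,E_mQ_m\phi)\). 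The result is a uniform \(1/2\)-Hölder modulus for each fixed projection \(P_jJ_QE_mU_m\).

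\textbf{Compactness and extraction.} \Cref{thm:spectral-l2-compactness} yields a subsequence converging strongly in \(L^2(I;H_Q)\) to some \(u_H\). I then refine in two further steps:
\begin{itemize}
\item Since \(L^2(I;V_Q)\) is a Hilbert space and the sequence is bounded in it, I pass to a further subsequence converging weakly to \(u_V\).
\item I take a diagonal subsequence over \(j\). For each fixed \(j\), the paths \(P_jJ_QE_mU_m\) are bounded by \(R\), take values in the finite-dimensional space \(H_j\), and are equicontinuous. Arzelà--Ascoli therefore gives uniform convergence for each \(j\).
\end{itemize}
Passing to these subsequences preserves the earlier strong and weak limits.

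To identify \(J_Qu_V=u_H\), I use the operator induced by \(J_Q\) on the Bochner spaces. It is bounded and linear, hence weak-to-weak continuous. So \(J_QE_{m_k}U_{m_k}\rightharpoonup J_Qu_V\), and uniqueness of weak limits forces \(J_Qu_V=u_H\).

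The main obstacle is the second step. One must verify carefully that the Riesz-defined ODE, tested against \(Q_m g_i\), reproduces exactly the functional \(\mathcal D_m\) evaluated at \(g_i\), including the forcing term. One must also confirm that the Hölder constant \(\Gamma_j=\sum_{i\in K_j}\norm{g_i}_{V_Q}\) is finite and independent of \(m\). It is, because \(K_j\) is finite and \(\norm{g_i}_{V_Q}=\sigma_i^{-1}\).
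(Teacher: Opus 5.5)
Your proposal is correct and follows the paper's proof essentially step for step: absorption with \(g=(1+C_Q^2)f^2\) and \(R_0=R\) gives global continuation and the bounds, the uniform \(L^2(I;V_Q')\) estimate with \(H_*=R\), \(R_*=\sqrt{1+|I|}\,R\) becomes finite-mode H\"older bounds via the coordinate identity, and spectral compactness, a weak subsequence identified through the induced \(J_Q\), and a diagonal extraction finish the argument. Your explicit checks, namely that \(\mathcal D_m\) matches the Riesz right-hand side tested against \(Q_m\phi\) and that \(\Gamma_j=\sum_{i\in K_j}\sigma_i^{-1}\) is finite and independent of \(m\), fill in details the paper leaves implicit.
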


\begin{proof}
Apply \cref{eq:box-forcing-absorption} to \(v=E_mx\) with the value
\(f(t)\).  The diffusion form on \(H_m\) equals
\(\norm{G_QE_mx}_{L^2}^2\), so the work hypothesis of
\cref{thm:variational-finite-dimensional} holds with
\(g(t)=(1+C_Q^2)f(t)^2\), which is independent of the solution and
continuous and therefore integrable on \(I\).  The initial datum satisfies
\(\norm{P_mu_0}\leq\norm{u_0}_{H_Q}\), so
\(\norm{P_mu_0}^2+\int_Ig\leq R^2\).  Continuity of \(\mathcal F\) implies
that its restriction to \(H_m\) is bounded in the dual norm on the compact
interval, which gives a local existence interval for all initial data in the
relevant \(H_m\)-ball.
The energy estimate extends \(U_m\) to all of \(I\) and gives the first two
bounds in \cref{eq:forced-bounds}; the exact graph identity gives the third.

With \(H_*=R\), \(R_*=\sqrt{1+\lvert I\rvert}\,R\), and
\(F_*^2=\int_If^2\), \cref{prop:box-spectral-dual-l2} gives a uniform bound
for the derivative in \(L^2(I;V_Q')\).  The coordinate identity
\cref{eq:box-spectral-coordinate} converts that bound into a uniform
\(1/2\)-H\"older estimate for each fixed projection, and
\cref{thm:spectral-l2-compactness} gives a strongly convergent subsequence in
\(L^2(I;H_Q)\).  After passing to a further subsequence, Hilbert weak
compactness of the uniformly bounded lifted solutions gives the weak
\(L^2(I;V_Q)\) limit, and continuity of the operator induced by \(J_Q\) on
the Bochner spaces identifies \(J_Qu_V=u_H\).  A
diagonal subsequence gives uniform convergence of every fixed finite
spectral projection.
\end{proof}

\begin{proof}[Proof of \cref{thm:forced-leray-hopf}]
Let \(R\) be the radius in the theorem.  Take the subsequence, the limits
\(u_H\) and \(u_V\), and the uniform bounds
from \cref{thm:forced-galerkin}, and write \(u_k=E_{m_k}U_{m_k}\).  The
weakly continuous representative \(u_c\), its identification with \(u_H\)
almost everywhere, the initial value \(u_c(a)=u_0\), and the gradient bound
follow from the construction in the proof of
\cref{thm:box-energy-weak-limit}, with \(\norm{u_0}_{H_Q}\) replaced by
\(R\), using the uniform projection convergence supplied by
\cref{thm:forced-galerkin}.

For \cref{eq:forced-weak-equation}, test the finite variational system
against \(P^V_{m_k}\phi\) and a scalar time test and integrate by parts in
time.  The resulting finite identity is the unforced one with the extra term
\(\int_I\mathcal F(t)\bigl(P^V_{m_k}\phi\bigr)\eta(t)\,dt\) on the right.
Its limit is the corresponding integral for \(\phi\), because
\[
  \Bigl\lvert
    \int_I\mathcal F(t)\bigl(P^V_{m_k}\phi-\phi\bigr)\eta(t)\,dt
  \Bigr\rvert
  \leq\norm{P^V_{m_k}\phi-\phi}_{V_Q}\int_If(t)\lvert\eta(t)\rvert\,dt,
\]
the control function \(f\) is continuous and hence integrable on the compact
interval, and the contractive energy projections converge strongly to the
identity.  The time-derivative, diffusion, convection, and initial-pairing
terms converge by the estimates used for the unforced limit: the \(H_Q\)-valued solutions
converge strongly in \(L^2(I;H_Q)\), the lifted solutions converge weakly in
\(L^2(I;V_Q)\), the Ladyzhenskaya estimate and the \(H_Q\)-convergence give
strong convergence in \(L^2(I;L^4)\), so that skew-symmetry and H\"older's
inequality pass the
convection integral, and \(P_{m_k}u_0\to u_0\) in \(H_Q\).  Passing to the
limit gives \cref{eq:forced-weak-equation}.

For \cref{eq:forced-energy-inequality}, fix \(t\in I\).  Because
\(\mathcal F\) is represented by the continuous path \(r\), the accumulated
work is the value at \(u_V\) of the bounded linear functional
\[
  W_t:L^2(I;V_Q)\longrightarrow\mathbb R,
  \qquad
  W_t(w)=\bigl(\mathbf 1_{[a,t]}r,\,w\bigr)_{L^2(I;V_Q)}
  =\int_a^t\bigl(r(s),w(s)\bigr)_{V_Q}\,ds ,
\]
whose representing vector lies in \(L^2(I;V_Q)\) because \(r\) is bounded on
the compact interval.  The exact finite energy identity of
\cref{thm:variational-finite-dimensional}, expressed in \(H_Q\) and \(V_Q\),
reads
\[
  \norm{X_k(t)}^2
  =\norm{P_{m_k}u_0}_{H_Q}^2+2W_t(u_k),
  \qquad
  X_k(t)=\bigl(J_Qu_k(t),\sqrt2\,R_tG_Qu_k\bigr),
\]
in the Hilbert product
\[
  H_Q\oplus_2L^2\bigl([a,t];L^2(Q;\mathbb R^{2\times2})\bigr).
\]
The sequence \(X_k(t)\) converges weakly to
\(X(t)=\bigl(u_c(t),\sqrt2\,R_tG_Qu_V\bigr)\): the first coordinate by the
pointwise weak extraction defining \(u_c\), the second because \(R_tG_Q\) is
bounded and \(u_k\rightharpoonup u_V\) in \(L^2(I;V_Q)\).  The right-hand
side converges as well, to \(\norm{u_0}_{H_Q}^2+2W_t(u_V)\), by strong
convergence of the initial projections and weak convergence of \(u_k\)
against the fixed functional \(W_t\).  Weak lower semicontinuity therefore
gives
\[
  \norm{X(t)}^2
  \leq\liminf_{k\to\infty}\norm{X_k(t)}^2
  =\norm{u_0}_{H_Q}^2+2W_t(u_V),
\]
which is \cref{eq:forced-energy-inequality} once \(\norm{X(t)}^2\) is
expanded.  Since \(t\) was arbitrary, the inequality holds at every time.
\end{proof}

\section{Acknowledgments}
This work was partially supported by the Simons Foundation Travel Grant for
Mathematicians (No.~0007730).  The author used ChatGPT (GPT-5.4, GPT-5.5,
and GPT-5.6 Sol) and Claude in
preparing and checking the Lean formalization.

\medskip
\noindent\textbf{Code availability.}\par
\noindent
The Lean sources are available at
\url{https://github.com/wwang-math/FluidGalerkinLean}.  They use Lean
\texttt{v4.30.0-rc1} and mathlib revision
\href{https://github.com/leanprover-community/mathlib4/tree/8d6f23e07b24c7dda53bb66ba1acaf7b99c9adf6}{\texttt{8d6f23e07b24}}.

\bibliographystyle{plain}
\bibliography{references}

\end{document}